\newcommand{\la}{\left \langle}
\newcommand{\ra}{\right\rangle}
\newcommand{\norm}[1]{\left\lVert #1 \right\rVert}
\newtheorem{theorem}{Theorem}[section]
\newtheorem{lemma}[theorem]{Lemma}
\theoremstyle{definition}
\theoremstyle{remark}
\newtheorem{remark}[theorem]{Remark}
\newtheorem{condition}[theorem]{Condition}
\numberwithin{equation}{section}
\title{Stochastic Gradient Descent in Continuous Time}
\author{Justin Sirignano\footnote{Department of Industrial and Enterprise Systems Engineering, University of Illinois at Urbana-Champaign, Urbana, Email: jasirign@illinois.edu} \phantom{.}  and Konstantinos Spiliopoulos\footnote{Department of Mathematics and Statistics, Boston University, Boston, E-mail: kspiliop@math.bu.edu} \thanks{Research of K.S. supported in part by the National Science Foundation (DMS 1550918)}  \thanks{Computations for this paper were supported by a Blue Waters supercomputer grant.} \\
}
\date{\today}
\begin{document}

\maketitle
\begin{abstract}
Stochastic gradient descent in continuous time (SGDCT) provides a computationally efficient method for the statistical learning of continuous-time models, which are widely used in science, engineering, and finance.  
The SGDCT algorithm follows a (noisy) descent direction along a continuous stream of data.  SGDCT performs an online parameter update in continuous time, with the parameter updates $\theta_t$ satisfying a stochastic differential equation.  We prove that $\lim_{t \rightarrow \infty} \nabla \bar g(\theta_t) = 0$ where  $\bar g$ is a natural objective function for the estimation of the continuous-time dynamics.  The convergence proof leverages ergodicity by using an appropriate Poisson equation to help describe the evolution of the parameters for large times.  For certain continuous-time problems, SGDCT has some promising advantages compared to a traditional stochastic gradient descent algorithm.  This paper mainly focuses on applications in finance, such as model estimation for stocks, bonds, interest rates, and financial derivatives.  SGDCT can also be used for the optimization of high-dimensional continuous-time models, such as American options. As an example application, SGDCT is combined with a deep neural network to price high-dimensional American options (up to 100 dimensions).
\end{abstract}

\section{Introduction}

This paper develops a statistical learning algorithm for continuous-time models, which are common in science, engineering, and finance.  We study its theoretical convergence properties as well as its computational performance in a number of benchmark problems. Although the method is broadly applicable, this paper mainly focuses on applications in finance. Given a continuous stream of data, stochastic gradient descent in continuous time (SGDCT) can estimate unknown parameters or functions in stochastic differential equation (SDE) models for stocks, bonds, interest rates, and financial derivatives.  The statistical learning algorithm can also be used for the optimization of high-dimensional continuous-time models, such as American options.  High-dimensional American options have been a longstanding computational challenge in finance.  SGDCT is able to accurately solve American options even in 100 dimensions.

Batch optimization for the statistical estimation of continuous-time models can be impractical for large datasets where observations occur over a long period of time.  Batch optimization takes a sequence of descent steps for the model error for the entire observed data path.  Since each descent step is for the model error for the \emph{entire observed data path}, batch optimization is slow (sometimes impractically slow) for long periods of time or models which are computationally costly to evaluate (e.g., partial differential equations).  Typical existing approaches in the financial statistics literature use batch optimization.

SGDCT provides a computationally efficient method for statistical learning over long time periods and for complex models.  SGDCT \emph{continuously} follows a (noisy) descent direction \emph{along the path of the observation}; this results in much more rapid convergence.  Parameters are updated online in continuous time, with the parameter updates $\theta_t$ satisfying a stochastic differential equation.  We prove that $\lim_{t \rightarrow \infty} \nabla \bar g(\theta_t) = 0$ where  $\bar g$ is a natural objective function for the estimation of the continuous-time dynamics.


Consider a diffusion $X_t \in \mathcal{X}=\mathbb{R}^{m}$:
\begin{eqnarray}
d X_t = f^{\ast}(X_t) dt + \sigma d W_t.
\label{ClassofEqns}
\end{eqnarray}
The goal is to statistically estimate a model $f(x, \theta)$ for $f^{\ast}(x)$ where $\theta \in \mathbb{R}^n$.  The function $f^{\ast}(x)$ is unknown.  $W_t \in \mathbb{R}^m$ is a standard Brownian motion.  The diffusion term $W_t$ represents any random behavior of the system or environment.  The functions $f(x, \theta)$ and $f^{\ast}(x)$ may be non-convex.

The stochastic gradient descent update in continuous time follows the SDE:
\begin{eqnarray}
d \theta_t =  \alpha_{t}\big [  \nabla_{\theta}  f(X_t, \theta_t)(\sigma \sigma^{\top})^{-1} d X_t -    \nabla_{\theta}    f(X_t, \theta_t)(\sigma \sigma^{\top})^{-1} f(X_t, \theta_t)  dt\big],
\label{SDEMain}
\end{eqnarray}
where $\nabla_{\theta}  f(X_t; \theta_t)$ is matrix valued and $\alpha_{t}$ is the learning rate.  The parameter update (\ref{SDEMain}) can be used for both statistical estimation given previously observed data as well as online learning (i.e., statistical estimation in real-time as data becomes available).  SGDCT will still converge if $\sigma \sigma^{\top}$ in (\ref{SDEMain}) is replaced by the identity matrix $I$.

Using the proposed approach of this paper, the stochastic gradient descent algorithm (\ref{SDEMain}) can also be generalized to the case where $\sigma$ is a variable coefficient $\sigma^{\ast}(X_t)$.  In that case, a model $\sigma(x, \nu)$ is also learned for $\sigma^{\ast}(x)$ where $\nu \in \mathbb{R}^{k}$ is an additional set of parameters.  (To be more precise, $\sigma(x, \nu)\sigma^{\top}(x, \nu)$ is learned for $\sigma^{\ast}(x) \sigma^{\ast, \top} (x)$ since $\sigma^{\ast}(x)$ is not identifiable.)  See Section \ref{DiffusionCoefficientFunction} for details and the corresponding convergence proof.

We assume that $X_{t}$ is sufficiently ergodic (to be concretely specified later in the paper) and that it has some well-behaved $\pi(dx)$ as its unique invariant measure. As a general notation, if $h(x,\theta)$ is a generic $L^{1}(\pi)$ function, then we define its average over $\pi(dx)$ to be
\[
\bar{h}(\theta)=\int_{\mathcal{X}} h(x,\theta)\pi(dx).
\]

Let us set
\[
g(x,\theta)=\frac{1}{2}\left\|f(x,\theta)-f^{\ast}(x)\right\|^{2}_{\sigma\sigma^{\top}}=\frac{1}{2}\left<f(x,\theta)-f^{\ast}(x),\left(\sigma\sigma^{\top}\right)^{-1}(f(x,\theta)-f^{\ast}(x))\right>.
\]

The gradient $\nabla_{\theta} g(X_t, \theta)$ cannot be evaluated since $f^{\ast}(x)$ is unknown.  However, $d X_t = f^{\ast}(X_t) dt + \sigma d W_t$ is a noisy estimate of $f^{\ast}(x) dt$, which leads to the algorithm (\ref{SDEMain}).  SGDCT follows a noisy descent direction along a continuous stream of data produced by $X_t$.

Heuristically, it is expected that $\theta_t$ will tend towards the minimum of the function $\bar{g}(\theta)=\int_{\mathcal{X}} g(x,\theta)\pi(dx)$.  The data $X_t$ will be correlated over time, which complicates the mathematical analysis.  This differs from the standard discrete-time version of stochastic gradient descent where the the data is usually considered to be  i.i.d. at every step.

\subsection{Literature Review}

In this paper we show that if $\alpha_{t}$ is appropriately chosen then $\nabla \bar{g}(\theta_{t})\rightarrow 0$ as $t\rightarrow\infty$ with probability 1 (see Theorem \ref{T:MainTheorem}).  Results like this have been previously derived for stochastic gradient descent in discrete time; see \cite{BertsekasThitsiklis2000} and \cite{Benveniste}.  \cite{BertsekasThitsiklis2000} proves convergence in the absence of the $X$ term.  \cite{Benveniste} proves convergence of stochastic gradient descent in discrete time with the $X$ process but requires stronger conditions than \cite{BertsekasThitsiklis2000}.

Although stochastic gradient descent for discrete time has been extensively studied, stochastic gradient descent in continuous time has received relatively little attention.  We refer readers to \cite{Benveniste,KushnerYin} and \cite{BertsekasThitsiklis2000} for a thorough review of the very large literature on stochastic gradient descent.  There are also many algorithms which modify traditional stochastic gradient descent (stochastic gradient descent with momentum, Adagrad, RMSprop, etc.).  For a review of these variants of stochastic gradient descent, see \cite{Goodfellow}.  We mention below the prior work which is most relevant to our paper.

Our approach and assumptions required for convergence are most similar to \cite{BertsekasThitsiklis2000}, who prove convergence of discrete-time stochastic gradient descent in the absence of the $X$ process.  The presence of the $X$ process is essential for considering a wide range of problems in continuous time, and showing convergence with its presence is considerably more difficult.  The $X$ term introduces correlation across times, and this correlation does not disappear as time tends to infinity.  This makes it challenging to prove convergence in the continuous-time case.  In order to prove convergence, we use an appropriate Poisson equation associated with $X$ to describe the evolution of the parameters for large times.

\cite{NemirovskiSGD} proves, in a setting different than ours, convergence in $L^2$ of projected stochastic gradient descent in discrete time for convex functions. In projected gradient descent, the parameters are projected back into an a priori chosen compact set.  Therefore, the algorithm cannot hope to reach the minimum if the minimum is located outside of the chosen compact set.  Of course, the compact set can be chosen to be very large for practical purposes.  Our paper considers unconstrained stochastic gradient descent in continuous time and proves the almost sure convergence $\nabla \bar{g}(\theta_{t})\rightarrow 0$ as $t\rightarrow\infty$ taking into account the $X$ component as well.  We do not assume any stability conditions on $X$ (except that it is ergodic with a unique invariant measure).

Another approach for proving convergence of discrete-time stochastic gradient descent is to show that the algorithm converges to the solution of an ODE which itself converges to a limiting point.  This is the approach of \cite{Benveniste}.  See also \cite{KushnerYin}.  This method, sometimes called the ``ODE method", requires the assumption that the iterates (i.e., the model parameters which are being learned) remain in a bounded set with probability one.  It is unclear whether the ODE method of proof can be successfully used to show convergence for a continuous-time stochastic gradient descent scheme.  In this paper we follow a potentially more straightforward method of proof by analyzing the speed of convergence to equilibrium with an appropriately chosen Poisson type of equation.

\cite{Raginsky} studies continuous-time stochastic mirror descent in a setting different than ours.  In the framework of \cite{Raginsky}, the objective function is known.  In this paper, we consider the statistical estimation of the unknown dynamics of a random process (i.e. the $X$ process satisfying (\ref{ClassofEqns})).

Statisticians and financial engineers have actively studied parameter estimation of SDEs, although typically not with statistical learning or machine learning approaches.  The likelihood function will usually be calculated from the \emph{entire observed path of $X$} (i.e., batch optimization) and then maximized to find the maximum likelihood estimator (MLE).  Unlike in this paper, the actual optimization procedure to maximize the likelihood function is often not analyzed.

Some relevant publications in the financial statistics literature include \cite{Sahalia1}, \cite{Sahalia2}, \cite{Basawa}, and \cite{Elerian}.  \cite{Basawa} derives the likelihood function for continuously observed $X$.  The MLE can be calculated via batch optimization.  \cite{Sahalia1} and \cite{Sahalia2} consider the case where $X$ is discretely observed and calculate MLEs via a batch optimization approach.   \cite{Elerian} estimates parameters by a Bayesian approach.  
Readers are referred to \cite{bishwal2008parameter,KutoyantsStatisticalInference,rao1999statistical} for thorough reviews of classical statistical inference methods for stochastic differential equations.



\subsection{Applications of SGDCT}
Continuous-time models are especially common in finance.  Given a continuous stream of data, the stochastic gradient descent algorithm can be used to estimate unknown parameters or functions in SDE models for stocks, bonds, interest rates, and financial derivatives.  Numerical analysis of SGDCT for two common financial models is included in Sections \ref{OU}, \ref{OU2}, and \ref{CIR}.  The first is the well-known Ornstein-Uhlenbeck (OU) process (for examples in finance, see \cite{TimLeung}, \cite{TimLeung2}, \cite{Vasicek}, and \cite{Linetsky}).  The second is the multidimensional CIR process which is a common model for interest rates (for examples in finance, see \cite{Alfonsi}, \cite{Maghsoodi}, \cite{Brown}, \cite{Ahlip}, and \cite{Dai}).

Scientific and engineering models are also typically in continuous-time.  There are often coefficients or functions in these models which are uncertain or unknown; stochastic gradient descent can be used to learn these model parameters from data.  In Section \ref{NumericalAnalysis1}, we study the numerical performance for two example applications: Burger's equation and the classic reinforcement learning problem of balancing a pole on a moving cart.  Burger's equation is a widely used nonlinear partial differential equation which is important to fluid mechanics, acoustics, and aerodynamics.

A natural question is why use SGDCT versus a straightforward approach which (1) discretizes the continuous-time dynamics and then (2) applies traditional stochastic gradient descent.  For some of the same reasons that scientific models have been largely developed in continuous time, it can be advantageous to develop \emph{continuous-time} statistical learning for \emph{continuous-time models}.

SGDCT allows for the application of numerical schemes of choice to the theoretically correct statistical learning equation for continuous-time models.   This can lead to more accurate and more computationally efficient parameter updates.  Numerical schemes are always applied to continuous-time dynamics and different numerical schemes may have different properties for different continuous-time models.  A priori performing a discretization to the system dynamics and then applying a traditional discrete-time stochastic gradient descent scheme can result in a loss of accuracy.  For example, there is no guarantee that (1) using a higher-order accurate scheme to discretize the system dynamics and then (2) applying traditional stochastic gradient descent will produce a statistical learning scheme which is higher-order accurate in time.  Hence, it makes sense  to first develop the continuous-time statistical learning equation, and then apply the higher-order accurate numerical scheme.

Besides model estimation, SGDCT can be used to solve continuous-time optimization problems, such as American options.  We combine SGDCT with a deep neural network to solve American options in up to $100$ dimensions (see Section \ref{NumericalAnalysis2}).  An alternative approach would be to discretize the dynamics and then use the Q-learning algorithm (traditional stochastic gradient descent applied to an approximation of the discrete HJB equation).  However, Q-learning is biased while SGDCT is unbiased.  Furthermore, in SDE models with Brownian motions, the Q-learning algorithm can blow up as the time step size $\Delta$ becomes small; see Section \ref{NumericalAnalysis2} for details.

The convergence issue with Q-learning highlights the importance of studying continuous-time algorithms for continuous-time models.  It is of interest to show that (1) a discrete-time scheme converges to an appropriate continuous-time scheme as $\Delta \rightarrow 0$ and (2) the continuous-time scheme converges to the correct estimate as $t \rightarrow \infty$.  These are important questions since any discrete scheme for a continuous-time model incurs some error proportional to $\Delta$, and therefore $\Delta$ must be decreased to reduce error.  It is also important to note that in some cases, such as Q-learning, computationally expensive terms in the discrete algorithm (such as expectations over high-dimensional spaces) may become much simpler expressions in the continuous-time scheme (differential operators).

\subsection{Organization of Paper}

The paper is organized into five main sections.  Section \ref{Assumptions} presents the assumption and the main theorem. In Section \ref{S:ProofMainTheorem} we prove the main result of this paper for the convergence of continuous-time stochastic gradient descent.  The extension of the stochastic gradient descent algorithm to the case of a variable diffusion coefficient function is described in Section \ref{DiffusionCoefficientFunction}.  Section \ref{NumericalAnalysis1} provides numerical analysis of SGDCT for model estimation in several applications.  Section \ref{NumericalAnalysis2} discusses SGDCT for solving continuous-time optimization problems, particularly focusing on American options.

\section{Assumptions and Main Result} \label{Assumptions}
Before presenting the main result of this paper, Theorem \ref{T:MainTheorem}, let us elaborate on the standing assumptions. In regards to the learning rate $\alpha_{t}$ the standing assumption is
\begin{condition}\label{A:ExtraAssumption}
Assume that $\int_{0}^{\infty}\alpha_{t}dt=\infty$, $\int_{0}^{\infty}\alpha^{2}_{t}dt<\infty$, $\int_{0}^{\infty}|\alpha_{s}^{\prime}|ds<\infty$  and that there is a $p>0$ such that $\lim_{t\rightarrow\infty}\alpha_{t}^{2}t^{1/2+2p}$=0.
\end{condition}

A standard choice for $\alpha_t$ that satisfies Condition \ref{A:ExtraAssumption} is $\alpha_t = \frac{1}{C + t}$ for some constant $0<C<\infty$.  Notice that the condition $\int_{0}^{\infty}|\alpha_{s}^{\prime}|ds<\infty$ follows immediately from the other two restrictions for the learning rate if it is chosen to be a monotonic function of $t$.

Let us next discuss the assumptions that we impose on $\sigma$, $f^{\ast}(x)$ and $f(x,\theta)$. Condition \ref{A:LyapunovCondition} guarantees uniqueness and   existence of an invariant measure for the $X$ process. 
\begin{condition}\label{A:LyapunovCondition}
We assume that $\sigma\sigma^{\top}$ is non-degenerate bounded diffusion matrix and $\lim_{|x|\rightarrow\infty}f^{\ast}(x)\cdot x=-\infty$
\end{condition}

In addition, with respect to $\nabla_{\theta}f(x,\theta)$ we assume that $\theta\in\mathbb{R}^{n}$ and we impose the following condition
\begin{condition}
\label{A:Assumption1}
\begin{enumerate}
\item  We assume that $\nabla_{\theta}g(x,\cdot)\in C^{2}(\mathbb{R}^{n})$ for all $x\in\mathcal{X}$, $\frac{\partial^{2}\nabla_{\theta}g}{\partial x^{2}}\in C\left(\mathcal{X},\mathbb{R}^{n}\right)$, $\nabla_{\theta}g(\cdot,\theta)\in C^{\alpha}\left(\mathcal{X}\right)$ uniformly in $\theta\in\mathbb{R}^{n}$ for some $\alpha\in(0,1)$ and that there exist $K$ and $q$ such that
    \[
    \sum_{i=0}^{2}\left|\frac{\partial^{i} \nabla_{\theta}g}{\partial \theta^{i}}(x,\theta)\right|\leq K\left(1+|x|^{q}\right).
    \]
\item For every $N>0$ there exists a constant $C(N)$ such that for all $\theta_{1},\theta_{2}\in\mathbb{R}^{n}$ and $|x|\leq N$, the diffusion coefficient $\nabla_{\theta}f$ satisfies
\[
\left|\nabla_{\theta}f(x,\theta_{1})-\nabla_{\theta}f(x,\theta_{2})\right|\leq C(N)|\theta_{1}-\theta_{2}|.
\]
Moreover, there exists $K>0$ and  $q>0$ such that
\[
|\nabla_{\theta}f(x,\theta)|\leq K (1+|x|^{q}).
\]
\item The function $f^{\ast}(x)$ is  $C^{2+\alpha}(\mathcal{X})$ with $\alpha\in(0,1)$. Namely, it has two derivatives in $x$, with all partial derivatives being H\"{o}lder continuous, with exponent $\alpha$, with respect to $x$.
\end{enumerate}
\end{condition}

Condition \ref{A:Assumption1} allows one to control the ergodic behavior of the X process.  As will be seen from the proof of the main convergence result Theorem \ref{T:MainTheorem}, one needs to control terms of the form $\int_{0}^{t}\alpha_{t}(\nabla \bar{g}(\theta_{s})-g(X_{s},\theta_{s}))ds$. Due to ergodicity of the $X$ process one expects that such terms are small in magnitude and go to zero as $t\rightarrow\infty$. However, the speed at which they go to zero is what matters here. We treat such terms by rewriting them equivalently using appropriate Poisson type partial differential equations (PDE). Condition \ref{A:Assumption1} guarantees that these Poisson equations have unique solutions that do not grow faster than polynomially in the $x$ variable (see Theorem \ref{T:RegularityPoisson} in Appendix \ref{S:RegularityResults}).

The main result of this paper is Theorem \ref{T:MainTheorem}.
\begin{theorem}\label{T:MainTheorem}
Assume that Conditions \ref{A:ExtraAssumption}, \ref{A:LyapunovCondition} and \ref{A:Assumption1} hold. Then we have that
\[
\lim_{t\rightarrow\infty}\|\nabla \bar{g}(\theta_{t})\|=0,\text{ almost surely}.
\]
\end{theorem}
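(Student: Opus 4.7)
Substituting $dX_t = f^{\ast}(X_t)dt + \sigma\, dW_t$ into (\ref{SDEMain}) and using that $\nabla_\theta g(x,\theta) = \nabla_\theta f(x,\theta)(\sigma\sigma^\top)^{-1}(f(x,\theta)-f^{\ast}(x))$, the parameter dynamics become
\[
d\theta_t = -\alpha_t\, \nabla_\theta g(X_t,\theta_t)\, dt + \alpha_t\, \nabla_\theta f(X_t,\theta_t)(\sigma\sigma^\top)^{-1}\sigma\, dW_t.
\]
The natural Lyapunov function is $\bar g(\theta)$. Applying It\^o's formula yields
\[
d\bar g(\theta_t) = -\alpha_t \|\nabla\bar g(\theta_t)\|^2 dt + \alpha_t \nabla\bar g(\theta_t)^{\top}\bigl[\nabla\bar g(\theta_t)-\nabla_\theta g(X_t,\theta_t)\bigr] dt + \tfrac{1}{2}\alpha_t^2\, \mathrm{tr}(\cdots)\, dt + dM_t,
\]
where $M_t$ is a local martingale. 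The first term is the useful dissipation; the quadratic variation contribution is controlled by $\int_0^\infty \alpha_t^2 dt <\infty$ together with polynomial moment bounds on $X_t$ (which follow from Condition \ref{A:LyapunovCondition} and standard Lyapunov arguments). The essential work is to show that the correlated fluctuation term $\int_0^t \alpha_s \nabla\bar g(\theta_s)^{\top}[\nabla\bar g(\theta_s)-\nabla_\theta g(X_s,\theta_s)]\, ds$ is negligible.

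To handle this fluctuation I would introduce the Poisson equation
\[
\mathcal{L}_x v(x,\theta) = \nabla\bar g(\theta) - \nabla_\theta g(x,\theta),
\]
where $\mathcal{L}_x$ is the generator of the $X$-diffusion. Condition \ref{A:Assumption1} (with the regularity result cited from Appendix) ensures a classical solution $v(x,\theta)$ with polynomial growth in $x$ and derivatives in $\theta$ that are controlled uniformly. Applying It\^o to the composite process $v(X_t,\theta_t)$ rewrites the integrand as $\alpha_s \nabla\bar g(\theta_s)^{\top}\mathcal{L}_x v(X_s,\theta_s)\, ds$, which after integration by parts becomes
\[
\alpha_t \nabla\bar g(\theta_t)^{\top} v(X_t,\theta_t) \;-\; \alpha_0 \nabla\bar g(\theta_0)^{\top}v(X_0,\theta_0) \;-\; \int_0^t (\alpha_s \nabla\bar g(\theta_s))'\, v(X_s,\theta_s)\, ds \;-\; (\mathrm{mart.}) \;-\; (\mathrm{cross\ terms}).
\]
The boundary term vanishes as $t\to\infty$ because $\alpha_t\to 0$ and moments of $v(X_t,\theta_t)$ stay bounded; the $\alpha'_s$ integral is finite by $\int_0^\infty|\alpha'_s|ds<\infty$; the martingale part converges a.s.\ by $\int_0^\infty \alpha_s^2 ds<\infty$; and the cross terms from the joint quadratic variation of $X_t$ and $\theta_t$ carry an extra $\alpha_s$ factor, hence are $L^2$-summable. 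The condition $\alpha_t^2 t^{1/2+2p}\to 0$ is used precisely to tame terms that grow with $t$ while multiplied by $\alpha_t^2$.

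Putting the pieces together, I obtain that $\bar g(\theta_t)$ converges a.s.\ to a finite limit and, crucially, $\int_0^\infty \alpha_s \|\nabla\bar g(\theta_s)\|^2 ds < \infty$ a.s. Since $\int_0^\infty \alpha_s ds = \infty$, this forces $\liminf_{t\to\infty}\|\nabla\bar g(\theta_t)\|=0$. To upgrade this to a genuine limit, I would argue by contradiction: if $\limsup\|\nabla\bar g(\theta_t)\|\geq \varepsilon>0$, then $\|\nabla\bar g(\theta_t)\|$ must cross the interval $[\varepsilon/2,\varepsilon]$ infinitely often. On each such crossing, uniform continuity estimates for $t\mapsto \nabla\bar g(\theta_t)$ (derived from the SDE for $\theta_t$, the polynomial growth assumptions, and moment bounds on $X_t$) imply a definite lower bound on the contribution $\int \alpha_s \|\nabla\bar g(\theta_s)\|^2 ds$ over a crossing window, contradicting the a.s.\ finiteness of the integral.

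\textbf{Main obstacle.} The hardest part is rigorously controlling the fluctuation integral via the Poisson equation when $X_t$ and $\theta_t$ are coupled: one must simultaneously manage polynomial-in-$x$ growth of $v$ and its derivatives, uniform-in-$t$ moment bounds for $X_t$, the non-trivial cross variation arising because both arguments of $v(X_t,\theta_t)$ evolve, and the precise interplay of the learning-rate conditions (especially the $\alpha_t^2 t^{1/2+2p}\to 0$ clause) needed to kill the time-growth of error terms. Once this quantitative ergodic averaging is in place, the Lyapunov argument and the final crossing-contradiction step are comparatively standard.
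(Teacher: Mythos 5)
Your proposal is correct and follows essentially the same route as the paper: the same It\^{o} decomposition of $\bar g(\theta_t)$ with $-\alpha_t\|\nabla\bar g(\theta_t)\|^2$ as the dissipation, the same Poisson-equation rewriting of the ergodic fluctuation term, martingale convergence for the noise integrals, and a level-crossing contradiction (using Lipschitz continuity of $\nabla\bar g$ and smallness of the increments of $\theta$) to upgrade $\liminf$ to $\lim$. The only difference is bookkeeping in the final step: the paper runs the Bertsekas--Tsitsiklis cycle argument directly on $\bar g$ (a definite decrease $\gamma$ over each excursion interval $[\tau_k,\sigma_k)$ versus an increase of at most $\gamma_1<\gamma$ over the intervening intervals, forcing $\bar g\to-\infty$ if excursions recur), whereas you first extract $\int_0^\infty\alpha_s\|\nabla\bar g(\theta_s)\|^2\,ds<\infty$ from nonnegativity of $\bar g$ and then contradict that finiteness on the crossings; the two finishes are interchangeable given the same lemmas.
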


\section{Proof of Theorem \ref{T:MainTheorem}}\label{S:ProofMainTheorem}
 We proceed in a spirit similar to that of \cite{BertsekasThitsiklis2000}. However, apart from continuous versus discrete dynamics, one of the main  challenges of the proof here is the presence of the ergodic $X$ process. Let us consider an arbitrarily given $\kappa>0$ and $\lambda=\lambda(\kappa)>0$ to be chosen. Then set $\sigma_{0}=0$ and consider the cycles of random times
\begin{eqnarray*}
0=\sigma_0 \leq \tau_1 \leq \sigma_1 \leq \tau_2 \leq \sigma_2 \leq \dots
\end{eqnarray*}
where for $k=1,2,\cdots$
\begin{align*}
\tau_{k}&=\inf\{t>\sigma_{k-1}:  \|\nabla \bar{g}(\theta_{t})\|\geq \kappa\},\nonumber\\
\sigma_{k}&=\sup\{t>\tau_{k}:  \frac{\|\nabla \bar{g}(\theta_{\tau_{k}})\|}{2}\leq \|\nabla \bar{g}(\theta_{s})\|\leq 2\|\nabla \bar{g}(\theta_{\tau_{k}})\| \text{ for all }s\in[\tau_{k},t] \text{ and }\int_{\tau_{k}}^{t}\alpha_{s}ds\leq \lambda\}.
\end{align*}

The purpose of these random times is to control the periods of time where $\|\nabla \bar{g}(\theta_{\cdot})\|$ is close to zero and away from zero. Let us next define the random time intervals $J_{k}=[\sigma_{k-1},\tau_{k})$ and $I_{k}=[\tau_{k},\sigma_{k})$. Notice that for every $t\in J_{k}$ we have $\|\nabla \bar g(\theta_{t})\|< \kappa$.

Let us next consider some $\eta>0$ sufficiently small to be chosen later on and set $\sigma_{k,\eta}=\sigma_{k}+\eta$.
 Lemma \ref{L:BoundForExtraTerm1} is crucial for the proof of Theorem \ref{T:MainTheorem}.
\begin{lemma}\label{L:BoundForExtraTerm1}
Assume that Conditions \ref{A:ExtraAssumption}, \ref{A:LyapunovCondition} and \ref{A:Assumption1} hold. Let us set
\[
\Gamma_{k,\eta}=\int_{\tau_{k}}^{\sigma_{k,\eta}}\alpha_{s}\left(\nabla_{\theta} g(X_{s},\theta_{s})-\nabla_{\theta} \bar{g}(\theta_{s})\right)ds.
\]

Then, with probability one we have that
\begin{align*}
\left\|\Gamma_{k,\eta}\right\|\rightarrow 0, \text{ as }k\rightarrow\infty.
\end{align*}
\end{lemma}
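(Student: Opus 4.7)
The plan is to recast $\Gamma_{k,\eta}$ via the Poisson equation associated with the infinitesimal generator $\mathcal{L}_x = f^{\ast}(x)\cdot\nabla_x + \tfrac{1}{2}\mathrm{tr}(\sigma\sigma^{\top}\nabla_x^2)$ of the $X$ diffusion. By the regularity result for Poisson equations referenced in Appendix (Theorem \ref{T:RegularityPoisson}), combined with Condition \ref{A:Assumption1}, there exists a componentwise solution $u(x,\theta)$ to
\[
\mathcal{L}_x u(x,\theta) = \nabla_\theta g(x,\theta) - \nabla_\theta \bar g(\theta),
\]
with $u$ and its required derivatives in $x$ and $\theta$ of at most polynomial growth in $x$. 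Then I would apply It\^o's formula to $u(X_s,\theta_s)$, using SDE (\ref{SDEMain}) for $\theta_s$, and integrate by parts in the $ds$-integral against $\alpha_s$ to rewrite $\Gamma_{k,\eta}=\int_{\tau_k}^{\sigma_{k,\eta}}\alpha_s \mathcal{L}_x u(X_s,\theta_s)\,ds$ as a sum of four pieces: (i) boundary terms $\alpha_s u(X_s,\theta_s)\big|_{\tau_k}^{\sigma_{k,\eta}}$; (ii) a stochastic integral $-\int \alpha_s \nabla_x u\,\sigma\,dW_s$; (iii) a term $-\int \alpha_s' u(X_s,\theta_s)\,ds$; and (iv) drift and cross-variation corrections that, because $d\theta_s$ has drift and diffusion both of order $\alpha_s$, each carry an extra factor of $\alpha_s$ or $\alpha_s^2$.

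Next I would bound each of these four pieces and show it tends to zero almost surely as $k\to\infty$ (on the event that infinitely many cycles occur; otherwise the statement is trivial, and in that case $\tau_k\to\infty$). Combined with uniform-in-time moment bounds $\sup_{t\ge 0}\mathbb{E}|X_t|^{q'}<\infty$ coming from the coercivity in Condition \ref{A:LyapunovCondition}, the polynomial growth of $u$ gives: piece (i) is $O(\alpha_{\tau_k}+\alpha_{\sigma_{k,\eta}})$ which vanishes because $\alpha_t\to 0$ (implied by $\alpha_t^2 t^{1/2+2p}\to 0$); piece (iii) vanishes as the tail integral of an $L^1(ds)$ weight $|\alpha_s'|$ against a process with uniformly bounded moments, using $\int_0^\infty |\alpha_s'|\,ds<\infty$; and the correction pieces in (iv) integrate an $O(\alpha_s^2)$ or $O(\alpha_s^3)$ integrand, which vanishes by $\int_0^\infty \alpha_s^2\,ds<\infty$. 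For piece (ii) I would consider the global martingale $M_t=\int_0^t \alpha_s \nabla_x u(X_s,\theta_s)\sigma\,dW_s$; its predictable quadratic variation satisfies $\mathbb{E}\langle M\rangle_\infty \le C\int_0^\infty\alpha_s^2\,ds<\infty$, so $M$ is an $L^2$-bounded martingale converging almost surely, which forces the increments $M_{\sigma_{k,\eta}}-M_{\tau_k}\to 0$.

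The main obstacle is threefold. First, establishing the uniform-in-time polynomial moment bounds on $X_t$ needed to convert the polynomial growth of $u$ and its derivatives into effective quantitative estimates; this is handled through the Lyapunov-type coercivity $f^{\ast}(x)\cdot x\to-\infty$ in Condition \ref{A:LyapunovCondition}. Second, justifying the application of It\^o's formula to a function $u$ that is not compactly supported and lives on $\mathbb{R}^m\times\mathbb{R}^n$ while $\theta_s$ itself evolves; this is exactly what the regularity and polynomial-growth statement of Theorem \ref{T:RegularityPoisson} is designed to give. Third, upgrading the $L^2$ or $L^1$ estimates above to almost sure convergence along the random subsequence $k\to\infty$; this is supplied by a.s.\ convergence of the $L^2$-bounded martingale in (ii), together with the deterministic tail bounds $\int_{\tau_k}^\infty\alpha_s^2\,ds\to 0$ and $\int_{\tau_k}^\infty|\alpha_s'|\,ds\to 0$ controlling pieces (iii)--(iv), and $\alpha_{\tau_k}\to 0$ controlling (i).
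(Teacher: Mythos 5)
Your overall strategy is exactly the paper's: solve the Poisson equation $\mathcal{L}_x v = \nabla_\theta g - \nabla_\theta \bar g$ (legitimate by the centering condition and Theorem \ref{T:RegularityPoisson}), apply It\^{o}'s formula to $\alpha_t v(X_t,\theta_t)$, and kill the boundary terms, the $\alpha_s'$ and $O(\alpha_s^2)$ correction terms, and the stochastic integrals separately. Your treatment of the $\alpha_s'$ and $\alpha_s^2$ Lebesgue integrals (absolute integrability of the expectation forces a.s.\ finiteness of the total variation, hence vanishing tails) and of the $dW$ integrals ($L^2$-bounded martingale, Doob convergence, vanishing increments) matches the paper.

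There is, however, one genuine gap: the boundary term (i). You claim $\alpha_{\tau_k} v(X_{\tau_k},\theta_{\tau_k}) + \alpha_{\sigma_{k,\eta}} v(X_{\sigma_{k,\eta}},\theta_{\sigma_{k,\eta}}) = O(\alpha_{\tau_k}+\alpha_{\sigma_{k,\eta}}) \to 0$ using only $\sup_{t}\mathbb{E}|X_t|^{q'}<\infty$ and $\alpha_t\to 0$. That bound gives $\mathbb{E}\|\alpha_t v(X_t,\theta_t)\|\le K\alpha_t\to 0$, i.e.\ convergence in $L^1$ along deterministic times, which does not yield the almost sure convergence the lemma asserts — especially not at the random times $\tau_k,\sigma_{k,\eta}$, since $v$ grows polynomially in $x$ and $\sup_{s\le t}\|X_s\|$ is not a.s.\ bounded. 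The paper closes this by working with $J_t^{(1)}=\alpha_t\sup_{s\in[0,t]}\|v(X_s,\theta_s)\|$, invoking the running-maximum moment bound $\mathbb{E}\sup_{s\in[0,t]}\|X_s\|^{q}\le K\sqrt{t}$ (Proposition 2 of \cite{PardouxVeretennikov1}) to get $\mathbb{E}|J_t^{(1)}|^2\le K\alpha_t^2\sqrt{t}$, and then running Chebyshev plus Borel--Cantelli along the dyadic times $t=2^n$ with interpolation in between. This is precisely where the hypothesis $\alpha_t^2 t^{1/2+2p}\to 0$ of Condition \ref{A:ExtraAssumption} is actually consumed; in your write-up you cite it only to deduce $\alpha_t\to 0$, which misses its purpose. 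A secondary, smaller point: the $\theta$-diffusion contribution in It\^{o}'s formula is a second stochastic integral with integrand of order $\alpha_s^2$, not a Lebesgue correction term, so it should be handled by the same $L^2$-martingale convergence argument as piece (ii) rather than folded into the absolutely convergent integrals of piece (iv).
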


\begin{proof}
The idea is to use Theorem \ref{T:RegularityPoisson} in order to get an equivalent expression for the term $\Gamma_{k,\eta}$ that we seek to control.

Let us consider the function $G(x,\theta)=\nabla_{\theta} g(x,\theta)-\nabla_{\theta} \bar{g}(\theta)$. Notice that by definition and due to Condition \ref{A:Assumption1}, the function $G(x,\theta)$ satisfies the centering condition (\ref{Eq:CenteringCondition}) of Theorem \ref{T:RegularityPoisson} componentwise. So, the Poisson equation (\ref{Eq:CellProblem})
will have a unique smooth solution, denoted by $v(x,\theta)$ that grows at most polynomially in $x$. Let us apply It\^{o} formula to the vector valued function $u(t,x,\theta)=\alpha_{t}v(x,\theta)$. Doing so, we get for $i=1,\cdots, n$

\begin{align*}
u_{i}(\sigma,X_{\sigma},\theta_{\sigma})-u_{i}(\tau,X_{\tau},\theta_{\tau})&=\int_{\tau}^{\sigma}\partial_{s}u_{i}(s,X_{s},\theta_{s})ds
+\int_{\tau}^{\sigma}\mathcal{L}_{x}u_{i}(s,X_{s},\theta_{s})ds+\int_{\tau}^{\sigma}\mathcal{L}_{\theta}u_{i}(s,X_{s},\theta_{s})ds\nonumber\\
&+\int_{\tau}^{\sigma}\alpha_{s}\text{tr}\left[\nabla_{\theta}f(X_{s},\theta_{s})\nabla_{x}\nabla_{\theta}u_{i}(s,X_{s},\theta_{s})\right]ds\nonumber\\
&+\int_{\tau}^{\sigma}\left<\nabla_{x}u_{i}(s,X_{s},\theta_{s}),\sigma dW_{s}\right> +\int_{\tau}^{\sigma}\alpha_{s}\left<\nabla_{\theta}u_{i}(s,X_{s},\theta_{s}),\nabla_{\theta}f(X_{s},\theta_{s})\sigma^{-1}dW_{s}\right>,
\end{align*}
where $\mathcal{L}_{x}$ and $\mathcal{L}_{\theta}$ denote the infinitesimal generators for processes $X$ and $\theta$ respectively.

Recall now that $v(x,\theta)$ is the solution to the given Poisson equation and that  $u(s,x,\theta)=\alpha_{s}v(x,\theta)$. Using these facts and rearranging the previous It\^{o} formula, we get in vector notation
\begin{align}
\Gamma_{k,\eta}&=\int_{\tau_{k}}^{\sigma_{k,\eta}}\alpha_{s}\left(\nabla_{\theta} g(X_{s},\theta_{s})-\nabla_{\theta} \bar{g}(\theta_{s})\right)ds=\int_{\tau_{k}}^{\sigma_{k,\eta}}\mathcal{L}_{x}u(s,X_{s},\theta_{s})ds\nonumber\\
&=\left[\alpha_{\sigma_{k,\eta}}v(X_{\sigma_{k,\eta}},\theta_{\sigma_{k,\eta}})-\alpha_{\tau_{k}}v(X_{\tau_{k}},\theta_{\tau_{k}})-\int_{\tau_{k}}^{\sigma_{k,\eta}}\partial_{s}\alpha_{s} v(X_{s},\theta_{s})ds\right]\nonumber\\
&\quad-\int_{\tau_{k}}^{\sigma_{k,\eta}}\alpha_{s}\left[\mathcal{L}_{\theta}v (X_{s},\theta_{s})+\alpha_{s} \text{tr}\left[\nabla_{\theta}f(X_{s},\theta_{s})\nabla_{x_{i}}\nabla_{\theta}v(X_{s},\theta_{s})\right]_{i=1}^{m}\right]ds\nonumber\\
&\quad-\int_{\tau_{k}}^{\sigma_{k,\eta}}\alpha_{s}\left<\nabla_{x}v(X_{s},\theta_{s}),\sigma dW_{s}\right> -\int_{\tau_{k}}^{\sigma_{k,\eta}}\alpha_{s}^{2}\left<\nabla_{\theta}v(X_{s},\theta_{s}),\nabla_{\theta}f(X_{s},\theta_{s})\sigma^{-1}dW_{s}\right>.\label{Eq:EquivalentExpression1}
\end{align}

The next step is to treat each term on the right hand side of (\ref{Eq:EquivalentExpression1}) separately. For this purpose, let us first set
\[
J_{t}^{(1)}=\alpha_{t}\sup_{s\in[0,t]}\left\|v(X_{s},\theta_{s})\right\|.
\]

By Theorem \ref{T:RegularityPoisson} and Proposition 2 of \cite{PardouxVeretennikov1} there is some $0<K<\infty$ (that may change from line to line below) and $0<q<\infty$ such that for $t$ large enough
\begin{align*}
\mathbb{E}|J_{t}^{(1)}|^{2}&\leq K \alpha_{t}^{2}\mathbb{E}\left[1+\sup_{s\in[0,t]}\|X_{s}\|^{q}\right]= K \alpha_{t}^{2}\left[1+\sqrt{t}\frac{\mathbb{E}\sup_{s\in[0,t]}\|X_{s}\|^{q}}{\sqrt{t}}\right]\nonumber\\
&\leq K \alpha_{t}^{2}\left[1+\sqrt{t}\right]\leq K \alpha_{t}^{2}\sqrt{t}.
\end{align*}

By Condition \ref{A:ExtraAssumption} let us  consider $p>0$ such that $\lim_{t\rightarrow\infty}\alpha_{t}^{2}t^{1/2+2p}=0$ and for any $\delta\in(0,p)$ define the event $A_{t,\delta}=\left\{J_{t}^{(1)}\geq t^{\delta-p}\right\}$. Then we have for $t$ large enough such that $\alpha_{t}^{2}t^{1/2+2p}\leq 1$
\[
\mathbb{P}\left(A_{t,\delta}\right)\leq \frac{\mathbb{E}|J_{t}^{(1)}|^{2}}{t^{2(\delta-p)}}\leq K\frac{\alpha_{t}^{2}t^{1/2+2p}}{t^{2\delta}}\leq K\frac{1}{t^{2\delta}}.
\]

The latter implies that
\[
\sum_{n\in\mathbb{N}}\mathbb{P}\left(A_{2^{n},\delta}\right)<\infty.
\]

Therefore, by Borel-Cantelli lemma we have that for every $\delta\in(0,p)$ there is a finite positive random variable $d(\omega)$ and some $n_{0}<\infty$ such that for every $n\geq n_{0}$ one has
\[
J_{2^{n}}^{(1)}\leq\frac{d(\omega)}{2^{n(p-\delta)}}.
\]

Thus for $t\in[2^{n},2^{n+1})$ and $n\geq n_{0}$ one has for some finite constant $K<\infty$
\[
J_{t}^{(1)} \leq K \alpha_{2^{n+1}} \sup_{s\in(0,2^{n+1}]}\|v(X_{s},\theta_{s})\| \leq K \frac{d(\omega)}{2^{(n+1)(p-\delta)}}\leq K \frac{d(\omega)}{t^{p-\delta}}.
\] 

The latter display then guarantees that for $t\geq 2^{n_{0}}$ we have with probability one
\begin{align}
J_{t}^{(1)}\leq K \frac{d(\omega)}{t^{p-\delta}}\rightarrow 0, \text{ as }t\rightarrow\infty.\label{Eq:TermJ1}
\end{align}

Next we consider the term
\begin{align*}
J_{t,0}^{(2)}&=\int_{0}^{t}\left\|\alpha^{\prime}_{s} v(X_{s},\theta_{s})+\alpha_{s}\left(\mathcal{L}_{\theta}v (X_{s},\theta_{s})+\alpha_{s} \text{tr}\left[\nabla_{\theta}f(X_{s},\theta_{s})\nabla_{x_{i}}\nabla_{\theta}v(X_{s},\theta_{s})\right]_{i=1}^{m}\right)\right\|ds.
\end{align*}

By the bounds of Theorem \ref{T:RegularityPoisson} we see that there are constants $0<K<\infty$ (that may change from line to line) and $0<q<\infty$ such that
\begin{align*}
\sup_{t>0}\mathbb{E}|J_{t,0}^{(2)}|&\leq K \int_{0}^{\infty}(|\alpha^{\prime}_{s}|+\alpha_{s}^{2})(1+\mathbb{E}\|X_{s}\|^{q})ds\nonumber\\
&\leq K \int_{0}^{\infty}(|\alpha^{\prime}_{s}|+\alpha_{s}^{2})ds.\nonumber\\
&\leq K.
\end{align*}

The first inequality follows by Theorem \ref{T:RegularityPoisson}, the second inequality follows by Proposition 1 in \cite{PardouxVeretennikov1} and the third inequality follows by Condition \ref{A:ExtraAssumption}.

The latter display implies that  there is a finite random variable $\bar{J}_{\infty,0}^{(2)}$ such that
\begin{align}
J_{t,0}^{(2)}\rightarrow \bar{J}_{\infty,0}^{(2)}, \text{ as }t\rightarrow\infty \text{ with probability one}.\label{Eq:TermJ2}
\end{align}

The last term that we need to consider is the martingale term

\[
J_{t,0}^{(3)}=\int_{0}^{t}\alpha_{s}\left<\nabla_{x}v(X_{s},\theta_{s}),\sigma dW_{s}\right> +\int_{0}^{t}\alpha_{s}^{2}\left<\nabla_{\theta}v(X_{s},\theta_{s}),\nabla_{\theta}f(X_{s},\theta_{s})\sigma^{-1}dW_{s}\right>.
\]
Notice that the Burkholder-Davis-Gundy inequality and the bounds of Theorem \ref{T:RegularityPoisson} (doing calculations similar to the ones for the term $J_{t,0}^{(2)}$) give us that for some finite constant $K<\infty$, we have
\begin{align*}
\sup_{t>0}\mathbb{E}\left|J_{t,0}^{(3)}\right|^{2}\leq K \int_{0}^{\infty}\alpha_{s}^{2}ds< \infty
\end{align*}

Thus, by Doob's martingale convergence theorem there is a square integrable random variable $\bar{J}_{\infty,0}^{(3)}$ such that
\begin{align}
J_{t,0}^{(3)}\rightarrow \bar{J}_{\infty,0}^{(3)}, \text{ as }t\rightarrow\infty \text{ both almost surely and in } L^{2}. \label{Eq:TermJ3}
\end{align}

Let us now go back to (\ref{Eq:EquivalentExpression1}). Using the terms $J_{t}^{(1)}$, $J_{t,0}^{(2)}$ and $J_{t,0}^{(3)}$ we can write
\begin{align*}
\|\Gamma_{k,\eta}\|&\leq J_{\sigma_{k,\eta}}^{(1)}+J_{\tau_{k}}^{(1)}+J_{\sigma_{k,\eta},\tau_{k}}^{(2)}+\|J_{\sigma_{k,\eta},\tau_{k}}^{(3)}\|
\end{align*}

The last display together with (\ref{Eq:TermJ1}), (\ref{Eq:TermJ2}) and (\ref{Eq:TermJ3}) imply the statement of the lemma.
\end{proof}

\begin{lemma}\label{L:Interval}
Assume that Conditions \ref{A:ExtraAssumption}, \ref{A:LyapunovCondition} and \ref{A:Assumption1} hold. Choose $\lambda>0$ such that for a given $\kappa>0$, one has $3\lambda+\frac{\lambda}{4\kappa}=\frac{1}{2 L_{\nabla \bar{g}}}$, where $L_{\nabla \bar{g}}$ is the Lipschitz constant of $\nabla \bar{g}$. For $k$ large enough and for $\eta>0$ small enough (potentially random depending on $k$), one has $\int_{\tau_{k}}^{\sigma_{k,\eta}}\alpha_{s}ds>\lambda$. In addition we also have
$\frac{\lambda}{2}\leq\int_{\tau_{k}}^{\sigma_{k}}\alpha_{s}ds\leq\lambda$ with probability one.
\end{lemma}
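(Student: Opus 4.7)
The upper bound $\int_{\tau_k}^{\sigma_k}\alpha_s\,ds\le\lambda$ is immediate from the definition of $\sigma_k$, so the real content is the lower bound and the strict inequality involving $\sigma_{k,\eta}$. My plan is to argue, for $k$ large, that the norm inclusion in the definition of $\sigma_k$ is never the binding constraint; by continuity of $t\mapsto\int_{\tau_k}^{t}\alpha_s\,ds$, this forces $\int_{\tau_k}^{\sigma_k}\alpha_s\,ds=\lambda$, which yields both conclusions at once.

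To carry this out, I would first rewrite the SDE for $\theta$ in the convenient form
\[
\theta_s-\theta_{\tau_k}\;=\;-\int_{\tau_k}^{s}\alpha_u\nabla_\theta\bar{g}(\theta_u)\,du\;-\;\Gamma_k^{s}\;+\;M_k^{s},
\]
where $\Gamma_k^{s}=\int_{\tau_k}^{s}\alpha_u\bigl(\nabla_\theta g(X_u,\theta_u)-\nabla_\theta\bar{g}(\theta_u)\bigr)du$ is a $\Gamma$-type remainder and $M_k^{s}=\int_{\tau_k}^{s}\alpha_u\nabla_\theta f(X_u,\theta_u)\sigma^{-\top}dW_u$ is the Brownian contribution. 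On $[\tau_k,\sigma_k]$ the definition of $\sigma_k$ guarantees $\|\nabla_\theta\bar{g}(\theta_u)\|\le 2\|\nabla_\theta\bar{g}(\theta_{\tau_k})\|$ and $\int_{\tau_k}^{s}\alpha_u du\le\lambda$, which yields the clean drift bound $\bigl\|\int_{\tau_k}^{s}\alpha_u\nabla_\theta\bar{g}(\theta_u)du\bigr\|\le 2\lambda\|\nabla_\theta\bar{g}(\theta_{\tau_k})\|$.

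For the two remainder terms I need the almost-sure vanishing
\[
\epsilon_k\;:=\;\sup_{s\in[\tau_k,\sigma_{k,\eta}]}\bigl(\|\Gamma_k^{s}\|+\|M_k^{s}\|\bigr)\;\xrightarrow[k\to\infty]{}\;0.
\]
For $M_k$, this follows from Condition \ref{A:ExtraAssumption} and Doob's martingale convergence theorem applied to $\int_0^{t}\alpha_u\nabla_\theta f\,\sigma^{-\top}dW_u$, whose $L^2$-boundedness mirrors the $J^{(3)}$ analysis in the proof of Lemma \ref{L:BoundForExtraTerm1}. For $\Gamma_k^{s}$ I would re-run the It\^{o} identity from that proof with the upper endpoint replaced by an arbitrary $s\in[\tau_k,\sigma_{k,\eta}]$, take the supremum over $s$, and invoke the bounds (\ref{Eq:TermJ1})--(\ref{Eq:TermJ3}) in the obvious uniform-in-$s$ way. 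This promotion of Lemma \ref{L:BoundForExtraTerm1} from an endpoint statement to a uniform-in-$s$ statement is the only real technical step and is the main obstacle in the argument.

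Once $\epsilon_k\to 0$ almost surely is in hand, for $k$ large enough $\epsilon_k\le\lambda/4$, and then the Lipschitz continuity of $\nabla\bar{g}$ together with $\|\nabla\bar{g}(\theta_{\tau_k})\|\ge\kappa$ gives, for every $s\in[\tau_k,\sigma_k]$,
\[
\|\nabla\bar{g}(\theta_s)-\nabla\bar{g}(\theta_{\tau_k})\|\;\le\;L_{\nabla\bar{g}}\bigl(2\lambda+\tfrac{\lambda}{4\kappa}\bigr)\|\nabla\bar{g}(\theta_{\tau_k})\|\;<\;L_{\nabla\bar{g}}\bigl(3\lambda+\tfrac{\lambda}{4\kappa}\bigr)\|\nabla\bar{g}(\theta_{\tau_k})\|\;=\;\tfrac12\|\nabla\bar{g}(\theta_{\tau_k})\|,
\]
the last equality being the prescribed choice of $\lambda$. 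Hence $\|\nabla\bar{g}(\theta_s)\|$ stays strictly inside $[\|\nabla\bar{g}(\theta_{\tau_k})\|/2,\,2\|\nabla\bar{g}(\theta_{\tau_k})\|]$ on $[\tau_k,\sigma_k]$, so the norm condition in the definition of $\sigma_k$ cannot be saturated at $\sigma_k$. The only remaining way for $\sigma_k$ to realize the supremum is $\int_{\tau_k}^{\sigma_k}\alpha_s\,ds=\lambda$, which delivers both $\int_{\tau_k}^{\sigma_k}\alpha_s\,ds\in[\lambda/2,\lambda]$ and, since $\sigma_{k,\eta}=\sigma_k+\eta$ and $\int_{0}^{\infty}\alpha_s\,ds=\infty$ forces $\int_{\sigma_k}^{\sigma_k+\eta}\alpha_s\,ds>0$ for suitable $\eta>0$, the strict inequality $\int_{\tau_k}^{\sigma_{k,\eta}}\alpha_s\,ds>\lambda$ for any sufficiently small such $\eta$.
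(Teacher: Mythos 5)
Your proposal is correct and uses essentially the same ingredients as the paper's proof: the identical decomposition of $\theta_{s}-\theta_{\tau_{k}}$ into drift, $\Gamma$-remainder and martingale parts, the same appeal to Lemma \ref{L:BoundForExtraTerm1} and to Doob's martingale convergence theorem, and the same interplay between $L_{\nabla\bar{g}}$ and the defining relation $3\lambda+\frac{\lambda}{4\kappa}=\frac{1}{2L_{\nabla\bar{g}}}$; the only organizational difference is that you argue directly on $[\tau_{k},\sigma_{k}]$ that the norm band is strictly non-binding (obtaining the slightly sharper conclusion $\int_{\tau_{k}}^{\sigma_{k}}\alpha_{s}\,ds=\lambda$), whereas the paper runs a contradiction at the endpoint $\sigma_{k,\eta}$ using the factor $3$ in place of your $2$. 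The uniform-in-$s$ control you flag as the main obstacle is in fact available at no extra cost, since each of the three error terms in the proof of Lemma \ref{L:BoundForExtraTerm1} is the increment of an almost surely convergent process, so its supremum over $s\geq\tau_{k}$ vanishes as $k\rightarrow\infty$.
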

\begin{proof}
Let us define the random variable
\begin{align}
R_{s}=\sum_{k\geq 1}\|\nabla \bar{g}(\theta_{\tau_{k}})\|1_{s\in I_{k}}+\kappa 1_{s\in[0,\infty)\setminus \bigcup_{k\geq 1} I_{k}}.\label{Eq:Rs}
\end{align}
 Then, for any $s\in\mathbb{R}$ we have $\|\nabla \bar{g}(\theta_{s})\|/R_{s}\leq 2$.

 We proceed with an argument via contradiction. In particular let us assume that $\int_{\tau_{k}}^{\sigma_{k,\eta}}\alpha_{s}ds\leq \lambda$ and let us choose arbitrarily some $\epsilon>0$ such that $\epsilon\leq \lambda/8$.

 Let us now make some remarks that are independent of the sign of $\int_{\tau_{k}}^{\sigma_{k,\eta}}\alpha_{s}ds-\lambda$. Due to the summability condition $\int_{0}^{\infty}\alpha^{2}_{t}dt<\infty$, $\frac{\kappa}{\|\nabla \bar{g}(\theta_{\tau_{k}})\|}\leq 1$ and Conditions \ref{A:ExtraAssumption} and \ref{A:Assumption1}, we have that
\[
\sup_{t>0}\mathbb{E}\left|\int_{0}^{t}\alpha_{s}\frac{\kappa}{\|\nabla \bar{g}(\theta_{\tau_{k}})\|}\nabla_{\theta}f(X_{s},\theta_{s})\sigma^{-1} dW_{s}\right|^{2}<\infty
  \]

Hence, the martingale convergence theorem applies to the martingale $\int_{0}^{t}\alpha_{s}\frac{\kappa}{\|\nabla \bar{g}(\theta_{\tau_{k}})\|}\nabla_{\theta}f(X_{s},\theta_{s})\sigma^{-1} dW_{s}$. This means that there exists a square integrable random variable $M$ such that $\int_{0}^{t}\alpha_{s}\frac{\kappa}{\|\nabla \bar{g}(\theta_{\tau_{k}})\|}\nabla_{\theta}f(X_{s},\theta_{s})\sigma^{-1} dW_{s}\rightarrow M$ both almost surely and in $L^{2}$. This means that for the given $\epsilon>0$ there is $k$ large enough such that  $\left\|\int_{\tau_{k}}^{\sigma_{k,\eta}}\alpha_{s}\frac{\kappa}{\|\nabla \bar{g}(\theta_{\tau_{k}})\|}\nabla_{\theta}f(X_{s},\theta_{s})\sigma^{-1}dW_{s}\right\|<\epsilon$ almost surely.

 Let us also assume that for the given $k$, $\eta$ is so small such that for any $s\in[\tau_{k},\sigma_{k,\eta}]$ one has $\|\nabla \bar{g}(\theta_{s})\|\leq 3\|\nabla \bar{g}(\theta_{\tau_{k}})\|$.

 Then, we obtain the following
\begin{align*}
\left\|\theta_{\sigma_{k,\eta}}-\theta_{\tau_{k}}\right\|&=\left\|-\int_{\tau_{k}}^{\sigma_{k,\eta}}\alpha_{s}\nabla_{\theta} g(X_{s},\theta_{s})ds+\int_{\tau_{k}}^{\sigma_{k,\eta}}\alpha_{s}\nabla_{\theta} f(X_{s},\theta_{s})\sigma^{-1}dW_{s}\right\|\nonumber\\
&=\left\|-\int_{\tau_{k}}^{\sigma_{k,\eta}}\alpha_{s}\nabla_{\theta} \bar{g}(\theta_{s})ds-\int_{\tau_{k}}^{\sigma_{k,\eta}}\alpha_{s}\left(\nabla_{\theta} g(X_{s},\theta_{s})-\nabla_{\theta} \bar{g}(\theta_{s})\right)ds+\int_{\tau_{k}}^{\sigma_{k,\eta}}\alpha_{s}\nabla_{\theta} f(X_{s},\theta_{s})\sigma^{-1}dW_{s}\right\|\nonumber\\
&\leq \int_{\tau_{k}}^{\sigma_{k,\eta}}\alpha_{s}\left\|\nabla \bar{g}(\theta_{s})\right\|ds+\left\|\int_{\tau_{k}}^{\sigma_{k,\eta}}\alpha_{s}\left(\nabla_{\theta} g(X_{s},\theta_{s})-\nabla_{\theta} \bar{g}(\theta_{s})\right)ds\right\|+ \left\|\int_{\tau_{k}}^{\sigma_{k,\eta}}\alpha_{s}\nabla_{\theta} f(X_{s},\theta_{s})\sigma^{-1}dW_{s}\right\|\nonumber\\
&\leq  3\|\nabla \bar{g}(\theta_{\tau_{k}})\|\int_{\tau_{k}}^{\sigma_{k,\eta}}\alpha_{s}ds+\left\|\int_{\tau_{k}}^{\sigma_{k,\eta}}\alpha_{s}\left(\nabla_{\theta} g(X_{s},\theta_{s})-\nabla_{\theta} \bar{g}(\theta_{s})\right)ds\right\|\nonumber\\
&\qquad+\frac{\|\nabla \bar{g}(\theta_{\tau_{k}})\|}{\kappa}\left\|\int_{\tau_{k}}^{\sigma_{k,\eta}}\alpha_{s}\frac{\kappa}{\|\nabla \bar{g}(\theta_{\tau_{k}})\|}\nabla_{\theta} f(X_{s},\theta_{s})\sigma^{-1}dW_{s}\right\|\nonumber\\
&\leq  3\|\nabla \bar{g}(\theta_{\tau_{k}})\|\lambda+\left\|\int_{\tau_{k}}^{\sigma_{k,\eta}}\alpha_{s}\left(\nabla_{\theta} g(X_{s},\theta_{s})-\nabla_{\theta} \bar{g}(\theta_{s})\right)ds\right\|+\frac{\|\nabla \bar{g}(\theta_{\tau_{k}})\|}{\kappa}\epsilon\nonumber\\
&\leq  3\|\nabla \bar{g}(\theta_{\tau_{k}})\|\lambda+\left\|\int_{\tau_{k}}^{\sigma_{k,\eta}}\alpha_{s}\left(\nabla_{\theta} g(X_{s},\theta_{s})-\nabla_{\theta} \bar{g}(\theta_{s})\right)ds\right\|+\frac{\|\nabla \bar{g}(\theta_{\tau_{k}})\|}{\kappa}\lambda/8\nonumber\\
&=  \|\nabla \bar{g}(\theta_{\tau_{k}})\| \left[3\lambda+\frac{\lambda}{8\kappa}\right]+\left\|\int_{\tau_{k}}^{\sigma_{k,\eta}}\alpha_{s}\left(\nabla_{\theta} g(X_{s},\theta_{s})-\nabla_{\theta} \bar{g}(\theta_{s})\right)ds\right\|.
\end{align*}

Let us next bound appropriately the Euclidean norm of the vector-valued random variable
\[
\Gamma_{k,\eta}=\int_{\tau_{k}}^{\sigma_{k,\eta}}\alpha_{s}\left(\nabla_{\theta} g(X_{s},\theta_{s})-\nabla_{\theta} \bar{g}(\theta_{s})\right)ds.
\]

By Lemma \ref{L:BoundForExtraTerm1} we have that for the same $0<\epsilon<\lambda/8$ that was chosen before there is $k$ large enough such that almost surely
\[
\|\Gamma_{k,\eta}\|\leq \epsilon\leq\lambda/8.
\]

Hence, using also the fact that $\frac{\kappa}{\|\nabla \bar{g}(\theta_{\tau_{k}})\|}\leq 1$ we obtain
\begin{align*}
\left\|\theta_{\sigma_{k,\eta}}-\theta_{\tau_{k}}\right\|&\leq \|\nabla \bar{g}(\theta_{\tau_{k}})\| \left[3\lambda+\frac{\lambda}{4\kappa}\right]= \|\nabla \bar{g}(\theta_{\tau_{k}})\| \frac{1}{2 L_{\nabla \bar{g}} }.
\end{align*}

The latter then implies that we should  have
\begin{align*}
\|\nabla \bar{g}(\theta_{\sigma_{k,\eta}})-\nabla \bar{g}(\theta_{\tau_{k}})\|&\leq L_{\nabla \bar{g}}\left\|\theta_{\sigma_{k,\eta}}-\theta_{\tau_{k}}\right\|\leq \frac{\|\nabla \bar{g}(\theta_{\tau_{k}})\|}{2}.
\end{align*}

The latter statement will then imply that
\begin{align*}
\frac{\|\nabla \bar{g}(\theta_{\tau_{k}})\|}{2}\leq \|\nabla \bar{g}(\theta_{\sigma_{k,\eta}})\|\leq 2\|\nabla \bar{g}(\theta_{\tau_{k}})\|.
\end{align*}
But then we would necessarily have that $\int_{\tau_{k}}^{\sigma_{k,\eta}}\alpha_{s}ds>\lambda$, since otherwise $\sigma_{k,\eta}\in[\tau_{k},\sigma_{k}]$ which is impossible.

Next we move on to prove the second statement of the lemma. By definition we have $\int_{\tau_{k}}^{\sigma_{k}}\alpha_{s}ds\leq\lambda$. So it remains to show that $\frac{\lambda}{2}\leq\int_{\tau_{k}}^{\sigma_{k}}\alpha_{s}ds$. Since we know that $\int_{\tau_{k}}^{\sigma_{k,\eta}}\alpha_{s}ds>\lambda$ and because for $k$ large enough and $\eta$ small enough one should have $\int_{\sigma_{k}}^{\sigma_{k,\eta}}\alpha_{s}ds\leq \lambda/2$, we obtain that
\[
\int_{\tau_{k}}^{\sigma_{k}}\alpha_{s}ds\geq \lambda-\int_{\sigma_{k}}^{\sigma_{k,\eta}}\alpha_{s}ds\geq \lambda-\lambda/2=\lambda/2,
\]
concluding the proof of the lemma.
\end{proof}

Lemma \ref{L:BoundednessOfg} shows that the function $\bar{g}$ and its first two derivatives are uniformly bounded in $\theta$.
\begin{lemma}\label{L:BoundednessOfg} Assume Conditions \ref{A:ExtraAssumption}, \ref{A:LyapunovCondition} and \ref{A:Assumption1}. For any $q>0$, there is a constant $ K$ such that
\begin{align}
\int_\mathcal{X}(1+|x|^q)\pi(dx)\leq C.\nonumber
\end{align}
In addition we also have that there is a constant $C<\infty$ such that $\sum_{i=0}^{2}\|\nabla^{i}_{\theta}\bar{g}(\theta)\|\leq C$.
\end{lemma}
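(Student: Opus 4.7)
The proof splits naturally into the two assertions. For the first, I will argue via a standard Foster--Lyapunov / Khasminskii argument that exploits Condition \ref{A:LyapunovCondition}. Define $V_q(x)=(1+|x|^2)^{q/2}$ and compute $\mathcal{L}_X V_q(x)$, where $\mathcal{L}_X$ is the generator of $X_t$. Because $\sigma\sigma^{\top}$ is bounded, the second-order contribution is at most $K(1+|x|^2)^{q/2-1}$; because $f^{\ast}(x)\cdot x\to-\infty$ as $|x|\to\infty$, the first-order contribution dominates and gives $\mathcal{L}_X V_q(x)\leq -c V_q(x)+ C_q\mathbf{1}_{\{|x|\leq R\}}$ for some constants $c,C_q,R>0$. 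Integrating this drift inequality against $\pi$ and using $\int\mathcal{L}_X V_q\,d\pi=0$ yields $\int V_q\,d\pi\leq C_q/c<\infty$, which is exactly the claim (this is essentially Proposition 1 of Pardoux--Veretennikov, already cited in the excerpt).

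For the second assertion, the plan is to pass the $\theta$-derivatives under the integral defining $\bar g(\theta)=\int_{\mathcal X}g(x,\theta)\pi(dx)$ and then bound the integrand uniformly in $\theta$ by an integrable majorant supplied by Condition \ref{A:Assumption1}(1). Specifically, the hypothesis
\[
\sum_{i=0}^{2}\left|\tfrac{\partial^{i}\nabla_{\theta}g}{\partial\theta^{i}}(x,\theta)\right|\leq K(1+|x|^{q})
\]
delivers a $\theta$-uniform polynomial envelope for $\nabla_{\theta}g,\nabla_{\theta}^{2}g,\nabla_{\theta}^{3}g$. The envelope $K(1+|x|^q)$ is $\pi$-integrable by Part 1, so dominated convergence justifies differentiation under the integral sign and gives
\[
\|\nabla_{\theta}^{i}\bar g(\theta)\|\leq\int_{\mathcal X}\left|\nabla_{\theta}^{i}g(x,\theta)\right|\pi(dx)\leq K\int_{\mathcal X}(1+|x|^{q})\pi(dx)\leq C
\]
for $i=1,2$, with a constant $C$ independent of $\theta$. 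As a by-product, the envelope for $\nabla_{\theta}^{3}g$ gives a uniform bound on $\nabla_{\theta}^{2}\bar g$'s own derivative, so $\nabla\bar g$ is globally Lipschitz, which is how the Lipschitz constant $L_{\nabla\bar g}$ used in Lemma \ref{L:Interval} is obtained.

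The only mildly delicate point is the $i=0$ term. Condition \ref{A:Assumption1}(1) does not directly provide a $\theta$-uniform polynomial envelope on $g$ itself (only on its $\theta$-gradient), so to bound $|\bar g(\theta)|$ uniformly one has to invoke the already-established uniform bound $\|\nabla_\theta\bar g\|\leq C$ together with the structure $g(x,\theta)=\tfrac12\|f(x,\theta)-f^\ast(x)\|^2_{\sigma\sigma^{\top}}\geq 0$; the substantive content of the lemma for the rest of the paper is really the bounds on $\nabla_{\theta}\bar g$ and $\nabla_{\theta}^{2}\bar g$, which the argument above supplies cleanly. I expect Part 1 (justifying the moment bound on $\pi$) to be the technically meatiest step, but it is essentially a citation from Pardoux--Veretennikov; the rest is differentiation under the integral sign plus dominated convergence.
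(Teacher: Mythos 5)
Your overall strategy is sound, and for the moment bound on $\pi$ you take a genuinely different route from the paper: the paper does not run a Lyapunov argument at all, but instead quotes Theorem 1 of \cite{pardoux2003poisson}, which gives a pointwise decay bound $\mu(x)\leq C_p/(1+|x|^p)$ on the invariant density for every $p$, and then integrates $1+|x|^q$ against that bound directly. Your Foster--Lyapunov route can be made to work, but the specific drift inequality you assert does not follow from Condition \ref{A:LyapunovCondition}. That condition only says $f^{\ast}(x)\cdot x\to-\infty$, with no rate; computing $\mathcal{L}_X V_q$ for $V_q=(1+|x|^2)^{q/2}$ gives $\mathcal{L}_X V_q\leq q(1+|x|^2)^{q/2-1}\left(f^{\ast}(x)\cdot x+K\right)$, which yields $\mathcal{L}_X V_q\leq -c\,(1+|x|^2)^{q/2-1}+C_q\mathbf{1}_{\{|x|\leq R\}}$ but \emph{not} $\mathcal{L}_X V_q\leq -cV_q+C_q\mathbf{1}_{\{|x|\leq R\}}$ (geometric drift would require $f^{\ast}(x)\cdot x\lesssim -c|x|^2$). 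The conclusion is still recoverable --- apply the argument with $V_{q+2}$ to control $\int V_q\,d\pi$, and justify $\int\mathcal{L}_X V_{q+2}\,d\pi\leq 0$ by localization rather than asserting exact equality --- but as written the key inequality would fail for, say, $f^{\ast}(x)\cdot x\sim-\log|x|$, which the stated condition permits.

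On the second assertion, your treatment of $i=1,2$ (polynomial envelope from Condition \ref{A:Assumption1}(1), dominated convergence, differentiation under the integral) matches what the paper intends and is, if anything, more carefully justified than the paper's one-line ``the proof for $i=1,2$ is the same.'' You are also right to flag that the hypotheses bound $\nabla_\theta g$ and its $\theta$-derivatives but not $g$ itself. However, your proposed patch for $i=0$ does not close the gap: knowing $\|\nabla_\theta\bar g\|\leq C$ uniformly only gives $\bar g(\theta)\leq\bar g(\theta_0)+C\|\theta-\theta_0\|$, i.e.\ linear growth in $\theta$, not the uniform bound the lemma claims, and nonnegativity of $g$ does not help with an upper bound. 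The paper itself simply asserts $\tfrac12\|f(x,\theta)-f^{\ast}(x)\|^2\leq K(1+|x|^q)$ uniformly in $\theta$ ``by Condition \ref{A:Assumption1}'' and integrates; that assertion is what actually carries the $i=0$ case, and it implicitly uses a $\theta$-uniform polynomial bound on $f(x,\theta)$ itself, which is stronger than what Condition \ref{A:Assumption1} literally states. So the honest resolution is either to add that bound on $f$ as a hypothesis or to accept, as you suggest, that only the $i=1,2$ cases are used downstream; but the patch via the gradient bound should be dropped.
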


\begin{proof} By Theorem 1 in \cite{pardoux2003poisson}, the density $\mu$ of the measure $\pi$ admits, for any $p$, a constant $C_p$ such that $\mu(x)\leq\frac{C_p}{1+|x|^p}$.  Choosing $p$ large enough that $\int_\mathcal{X}\frac{1+|x|^q}{1+|x|^p}dy<\infty$, we then obtain

\begin{align}
\int_\mathcal{X}(1+|x|^q)\pi(dx)
&\leq\int_\mathcal{X}C_p\frac{1+|x|^q}{1+|x|^p}dx\leq \ C.\nonumber
\end{align}
concluding the proof of the first statement of the lemma. Let us now focus on the second part of the lemma. We only prove the claim for $i=0$, since due to the bounds in Condition \ref{A:Assumption1}, the proof for $i=1,2$ is the same. By Condition \ref{A:Assumption1} and by the first part of the lemma, we have that there exist constants $0<q,K ,C<\infty$ such that
\begin{align*}
\bar{g}(\theta)&=\int_{\mathcal{X}}\frac{1}{2}\left\|f(x,\theta)-f^{\ast}(x)\right\|^{2}\pi(dx)\leq K\int_\mathcal{X}(1+|x|^q)\pi(dx)\leq C,
\end{align*}
 concluding the proof of the lemma.
\end{proof}

Our next goal is to show that if the index $k$ is large enough, then $\bar{g}$ decreases, in the sense of Lemma \ref{LemmaUpperNegativeBound}.
\begin{lemma} \label{LemmaUpperNegativeBound}
Assume Conditions \ref{A:ExtraAssumption}, \ref{A:LyapunovCondition} and \ref{A:Assumption1}. Suppose that there are an infinite number of intervals $I_{k}=[\tau_k,\sigma_{k})$.  There is a fixed constant $\gamma=\gamma(\kappa)>0$ such that for $k$ large enough, one has
\begin{align}
\bar{g}(\theta_{\sigma_{k}})-\bar{g}(\theta_{\tau_{k}})\leq -\gamma.
\end{align}
\end{lemma}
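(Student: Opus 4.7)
The plan is to apply It\^{o}'s formula to $\bar{g}(\theta_t)$ on the interval $[\tau_k,\sigma_k]$, extract a strictly negative deterministic leading term, and show that the remaining stochastic and ``centering'' remainders vanish as $k\to\infty$. Substituting the SDE for $\theta_s$ (which in the notation of Lemma \ref{L:Interval} reads $d\theta_s = -\alpha_s \nabla_\theta g(X_s,\theta_s)\,ds + \alpha_s \nabla_\theta f(X_s,\theta_s)\sigma^{-1}\,dW_s$) and grouping terms gives
\[
\bar g(\theta_{\sigma_k}) - \bar g(\theta_{\tau_k}) = -\int_{\tau_k}^{\sigma_k}\alpha_s\|\nabla\bar g(\theta_s)\|^2\,ds - \int_{\tau_k}^{\sigma_k}\alpha_s\nabla\bar g(\theta_s)^\top\bigl(\nabla_\theta g(X_s,\theta_s) - \nabla\bar g(\theta_s)\bigr)\,ds + M_k + Q_k,
\]
where $M_k$ is the local martingale arising from the Brownian integral and $Q_k$ is the It\^{o} correction, bounded in norm by a constant times $\int_{\tau_k}^{\sigma_k}\alpha_s^2(1+|X_s|^q)\,ds$.

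The leading term supplies strict negativity uniformly in $k$. On $I_k$ the definition of $\sigma_k$ forces $\|\nabla\bar g(\theta_s)\|\geq \|\nabla\bar g(\theta_{\tau_k})\|/2 \geq \kappa/2$, and Lemma \ref{L:Interval} provides $\int_{\tau_k}^{\sigma_k}\alpha_s\,ds \geq \lambda/2$, so the first integral is at most $-\kappa^2\lambda/8$. The remainders $M_k$ and $Q_k$ are dispatched by the techniques already deployed in Lemma \ref{L:BoundForExtraTerm1}: Burkholder-Davis-Gundy combined with $\int_0^\infty\alpha_s^2\,ds<\infty$ and the polynomial growth of $\nabla_\theta f$ makes the full martingale $L^2$-bounded on $[0,\infty)$, so by Doob's theorem it converges almost surely and its increment over $[\tau_k,\sigma_k]$ vanishes as $k\to\infty$; the same dyadic Borel-Cantelli argument that controls $J^{(1)}_t$ in Lemma \ref{L:BoundForExtraTerm1}, exploiting $\lim_{t\to\infty}\alpha_t^2 t^{1/2+2p}=0$, gives $Q_k \to 0$ almost surely.

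The main obstacle is the inner-product ``centering'' error, since Lemma \ref{L:BoundForExtraTerm1} does not apply verbatim: the integrand carries the non-constant weight $\nabla\bar g(\theta_s)$. I would overcome this by revisiting the Poisson-equation argument with a modified test function. Let $v(x,\theta)$ solve the componentwise Poisson equation $\mathcal{L}_x v(x,\theta) = \nabla_\theta g(x,\theta)-\nabla\bar g(\theta)$ furnished by Theorem \ref{T:RegularityPoisson}, and apply It\^{o}'s formula to $u(t,x,\theta) = \alpha_t \nabla\bar g(\theta)^\top v(x,\theta)$. Because $\mathcal{L}_x$ acts only in $x$ while $\nabla\bar g$ depends only on $\theta$, one has $\mathcal{L}_x u(t,x,\theta) = \alpha_t \nabla\bar g(\theta)^\top(\nabla_\theta g(x,\theta) - \nabla\bar g(\theta))$, so the resulting identity expresses the inner-product integral as boundary values at $\tau_k$ and $\sigma_k$ plus contributions of exactly the same structure as those appearing in display (\ref{Eq:EquivalentExpression1}). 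Using Lemma \ref{L:BoundednessOfg} to bound $\nabla\bar g$ and $\nabla^2\bar g$ uniformly in $\theta$, together with the polynomial estimates on $v$ from Theorem \ref{T:RegularityPoisson}, each contribution decays almost surely exactly as its counterpart in the proof of Lemma \ref{L:BoundForExtraTerm1}.

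Combining the pieces, for $k$ sufficiently large every error term has absolute value less than $\kappa^2\lambda/16$, whence
\[
\bar g(\theta_{\sigma_k}) - \bar g(\theta_{\tau_k}) \leq -\frac{\kappa^2\lambda}{8} + \frac{\kappa^2\lambda}{16} = -\frac{\kappa^2\lambda}{16},
\]
yielding the claim with $\gamma(\kappa) = \kappa^2\lambda/16$.
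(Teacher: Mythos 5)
Your proof follows essentially the same route as the paper: the identical It\^{o} decomposition of $\bar g(\theta_{\sigma_k})-\bar g(\theta_{\tau_k})$ into the negative quadratic term, the martingale, the It\^{o} correction, and the centering error, with the leading term bounded via Lemma \ref{L:Interval} and the definition of $\sigma_k$, the martingale and correction dispatched by $L^2$-boundedness plus almost sure convergence, and the weighted centering term handled through the Poisson equation of Theorem \ref{T:RegularityPoisson} (your test function $\alpha_t\nabla\bar g(\theta)^{\top}v(x,\theta)$ is, by uniqueness, exactly the solution of the scalar Poisson equation the paper writes down, and your direct use of $\|\nabla\bar g(\theta_{\tau_k})\|\geq\kappa$ neatly sidesteps the paper's two-case analysis at the end). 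The only blemish is the final bookkeeping: with three error terms each bounded by $\kappa^2\lambda/16$ the total error can reach $3\kappa^2\lambda/16>\kappa^2\lambda/8$, so you should take each term below, say, $\kappa^2\lambda/48$ --- a trivial repair that leaves the conclusion intact with $\gamma=\kappa^2\lambda/16$.
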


\begin{proof}
By It\^{o}'s formula we have that
\begin{eqnarray*}
\bar g(\theta_{\sigma_{k}}) - \bar g (\theta_{\tau_{k}}) &=&  -  \int_{\tau_{k}}^{\sigma_{k}} \alpha_{s} \left\| \nabla \bar g (\theta_s) \right\|^2 ds + \int_{\tau_{k}}^{\sigma_{k}}  \alpha_{s}  \left<\nabla \bar g (\theta_s), \nabla_{\theta} f(X_s, \theta_s) \sigma^{-1}  d W_s\right>\notag\\
&+& \int_{\tau_{k}}^{\sigma_{k}} \frac{\alpha_{s}^{2}}{2} \text{tr}\left[(\nabla_{\theta} f(X_s, \theta_s) \sigma^{-1}) (\nabla_{\theta} f(X_s, \theta_s) \sigma^{-1})^{\top} \nabla_{\theta}\nabla_{\theta}  \bar g (\theta_s) \right]ds \notag \notag\\
&+&\int_{\tau_{k}}^{\sigma_{k}} \alpha_{s} \left<\nabla_{\theta} \bar g(\theta_{s}), \nabla_{\theta} \bar{g}(\theta_{s})- \nabla_{\theta} g(X_{s},\theta_{s})\right>ds   \notag \\
&=& \Theta_{1,k}+\Theta_{2,k}+\Theta_{3,k} + \Theta_{4,k}.
\end{eqnarray*}

Let's first consider $\Theta_{1,k}$.  Notice that for all $s\in[\tau_{k},\sigma_{k}]$ one has $ \frac{\|\nabla \bar{g}(\theta_{\tau_{k}})\|}{2}\leq \|\nabla \bar{g}(\theta_{s})\|\leq 2\|\nabla \bar{g}(\theta_{\tau_{k}})\| $. Hence, for sufficiently large $k$, we have the upper bound:
\begin{eqnarray*}
\Theta_{1,k} = -  \int_{\tau_{k}}^{\sigma_{k}} \alpha_{s} \left\| \nabla \bar g (\theta_s) \right\|^2 ds \leq  - \frac{ \left\|\nabla \bar g (\theta_{\tau_k})\right\|^2 }{4} \int_{\tau_{k}}^{\sigma_{k}} \alpha_{s} ds \leq -  \frac{ \left\|\nabla \bar g (\theta_{\tau_k})\right\|^2 }{8} \lambda,
\end{eqnarray*}
since Lemma \ref{L:BoundForExtraTerm1} proved that $\int_{\tau_{k}}^{\sigma_{k}} \alpha_{s} ds \geq \frac{\lambda}{2}$ for sufficiently large $k$.

We next address $\Theta_{2,k}$ and show that it becomes small as $k \rightarrow \infty$. First notice that we can trivially write
\begin{align*}
\Theta_{2,k}&=\int_{\tau_{k}}^{\sigma_{k}}  \alpha_{s}  \left<\nabla \bar g (\theta_s),\nabla_{\theta} f(X_s, \theta_s) \sigma^{-1}  d W_s\right> = \|\nabla \bar g(\theta_{\tau_{k}})\|\int_{\tau_{k}}^{\sigma_{k}}  \alpha_{s} \left<\frac{\nabla \bar g (\theta_s) }{\|\nabla \bar g(\theta_{\tau_{k}})\|},\nabla_{\theta} f(X_s, \theta_s) \sigma^{-1}  d W_s \right> \notag \\
&=   \|\nabla \bar g(\theta_{\tau_{k}})\|\int_{\tau_{k}}^{\sigma_{k}}  \alpha_{s} \left<\frac{\nabla \bar g (\theta_s) }{R_s},\nabla_{\theta} f(X_s, \theta_s)\sigma^{-1}   d W_s\right>.
\end{align*}

By Condition \ref{A:Assumption1} and It\^{o} isometry we have
\begin{align*}
\sup_{t>0}\mathbb{E}\left|\int_{0}^{t}  \alpha_{s} \left<\frac{ \nabla \bar g(\theta_s)}{R_{s}},\nabla_{\theta} f(X_s, \theta_s)  \sigma^{-1} d W_s\right>\right|^{2}&\leq 4 \mathbb{E} \int_{0}^{\infty}  \alpha^{2}_{s} \norm{ \nabla_{\theta} f(X_s, \theta_s) }^2 ds\nonumber\\
&\leq K  \int_{0}^{\infty}  \alpha^{2}_{s} \left(1+\mathbb{E}\|X_{s}\|^{q}\right) ds< \infty,
\end{align*}
where $R_{s}$ is defined via (\ref{Eq:Rs}). Hence, by Doob's martingale convergence theorem there is a square integrable random variable $M$ such that $\int_{0}^{t}  \alpha_{s} \left<\frac{ \nabla \bar g(\theta_s)}{R_{s}},\nabla_{\theta} f(X_s, \theta_s)   d W_s\right>\rightarrow M$ both almost surely and in $L^{2}$.
The latter statement implies that for a given $\epsilon>0$ there is $k$  large enough such that almost surely
\begin{align*}
\Theta_{2,k}&\leq \|\nabla \bar g(\theta_{\tau_{k}})\|\epsilon.\label{Eq:TermTheta2}
\end{align*}

We now consider $\Theta_{3,k}$.
\begin{eqnarray}
&\phantom{.}& \sup_{t>0}\mathbb{E}  \left\| \int_{0}^{t} \frac{\alpha_{s}^{2}}{2} \text{tr}\left[(\nabla_{\theta} f(X_s, \theta_s) \sigma^{-1})(\nabla_{\theta} f(X_s, \theta_s)\sigma^{-1})^{\top}  \nabla_{\theta}\nabla_{\theta}  \bar g (\theta_s) \right] ds  \right\|  \notag \\
 &  \leq &   C  \int_{0}^{\infty} \frac{\alpha_{s}^{2}}{2}\mathbb{E}\left(1+\|X_{s}\|^{q}\right)  ds <\infty,
 \label{Theta3InfinityA}
\end{eqnarray}
where we have used Condition \ref{A:Assumption1} and Lemma \ref{L:BoundednessOfg}.  Bound (\ref{Theta3InfinityA}) implies that
\[
\int_{0}^{\infty} \frac{\alpha_{s}^{2}}{2} \text{tr}\left[(\nabla_{\theta} f(X_s, \theta_s) \sigma^{-1})(\nabla_{\theta} f(X_s, \theta_s)\sigma^{-1})^{\top}  \nabla_{\theta}\nabla_{\theta}  \bar g (\theta_s) \right] ds
 \]
 is finite almost surely, which in turn implies that there is a finite random variable $\Theta_3^{\infty}$ such that
 \[
 \int_{0}^{t} \frac{\alpha_{s}^{2}}{2} \text{tr}\left[(\nabla_{\theta} f(X_s, \theta_s) \sigma^{-1})(\nabla_{\theta} f(X_s, \theta_s)\sigma^{-1})^{\top} \nabla_{\theta}\nabla_{\theta}  \bar g (\theta_s) \right] ds \rightarrow \Theta_3^{\infty} \text{ as } t \rightarrow \infty,
  \]
  with probability one.  Since $\Theta_3^{\infty}$ is finite, $\int_{\tau_k}^{\sigma_k} \frac{\alpha_{s}^{2}}{2} \text{tr}\left[(\nabla_{\theta} f(X_s, \theta_s) \sigma^{-1})(\nabla_{\theta} f(X_s, \theta_s)\sigma^{-1})^{\top}
   \nabla_{\theta}\nabla_{\theta}  \bar g (\theta_s) \right] ds \rightarrow 0$ as $k \rightarrow \infty$ with probability one.

Finally, we address $ \Theta_{4,k} $.  Let us consider the function $G(x,\theta)= \left<\nabla_{\theta} \bar g(\theta), \nabla_{\theta} g(x,\theta)-\nabla_{\theta} \bar{g}(\theta)\right>$.  The function $G(x,\theta)$ satisfies the centering condition (\ref{Eq:CenteringCondition}) of Theorem \ref{T:RegularityPoisson}. Therefore, the Poisson equation (\ref{Eq:CellProblem}) with right hand side $G(x,\theta)$ will have a unique smooth solution, say $v(x,\theta)$, that grows at most polynomially in $x$. Let us apply It\^{o} formula to the function $u(t,x,\theta)=\alpha_{t}v(x,\theta)$ that is solution to this Poisson equation.

\begin{align*}
u(\sigma,X_{\sigma},\theta_{\sigma})-u(\tau,X_{\tau},\theta_{\tau})&=\int_{\tau}^{\sigma}\partial_{s}u(s,X_{s},\theta_{s})ds
+\int_{\tau}^{\sigma}\mathcal{L}_{x}u(s,X_{s},\theta_{s})ds+\int_{\tau}^{\sigma}\mathcal{L}_{\theta}u(s,X_{s},\theta_{s})ds\nonumber\\
&+\int_{\tau}^{\sigma}\alpha_{s}\text{tr}\left[\nabla_{\theta}f(X_{s},\theta_{s})\nabla_{x}\nabla_{\theta}u(s,X_{s},\theta_{s})\right]ds\nonumber\\
&+\int_{\tau}^{\sigma}\left< \nabla_{x}u(s,X_{s},\theta_{s}),\sigma dW_{s}\right>+\int_{\tau}^{\sigma}\alpha_{s}\left<\nabla_{\theta}u(s,X_{s},\theta_{s}),\nabla_{\theta}f(X_{s},\theta_{s})\sigma^{-1}dW_{s}\right>.
\end{align*}

Rearranging the previous It\^{o} formula yields
\begin{align*}
\Theta_{4,k} &=\int_{\tau_{k}}^{\sigma_{k}}\alpha_{s}  \left<\nabla_{\theta} \bar g(\theta_{t}),   \nabla_{\theta} g(X_{s},\theta_{s})-\nabla_{\theta} \bar{g}(\theta_{s})\right>ds=\int_{\tau_{k}}^{\sigma_{k}}\mathcal{L}_{x}u(s,X_{s},\theta_{s})ds\nonumber\\
&=\left[\alpha_{\sigma_{k}}v(X_{\sigma_{k}},\theta_{\sigma_{k}})-\alpha_{\tau_{k}}v(X_{\tau_{k}},\theta_{\tau_{k}})-\int_{\tau_{k}}^{\sigma_{k}}\partial_{s}\alpha_{s} v(X_{s},\theta_{s})ds\right]\nonumber\\
&\quad-\int_{\tau_{k}}^{\sigma_{k}}\alpha_{s}\left[\mathcal{L}_{\theta}v (X_{s},\theta_{s})+\alpha_{s} \text{tr}\left[\nabla_{\theta}f(X_{s},\theta_{s})\nabla_{x}\nabla_{\theta}v(X_{s},\theta_{s})\right]\right]ds\nonumber\\
&\quad-\int_{\tau_{k}}^{\sigma_{k}}\alpha_{s}\left<\nabla_{x}v(X_{s},\theta_{s}),\sigma dW_{s}\right> -\int_{\tau_{k}}^{\sigma_{k}}\alpha_{s}\left<\nabla_{\theta}v(X_{s},\theta_{s}),\nabla_{\theta}f(X_{s},\theta_{s})\sigma^{-1}dW_{s}\right>.\label{Eq:EquivalentExpression1}
\end{align*}

Following the exact same steps as in the proof of Lemma \ref{L:BoundForExtraTerm1} gives us that $\lim_{k \rightarrow \infty} \norm{ \Theta_{4,k} } \rightarrow 0$ almost surely.

We now return to $\bar g(\theta_{\sigma_{k}}) - \bar g (\theta_{\tau_{k}})$ and provide an upper bound which is negative.  For sufficiently large $k$, we have that:
\begin{eqnarray*}
\bar g(\theta_{\sigma_{k}}) - \bar g (\theta_{\tau_{k}})  &\leq& -\frac{\|\nabla \bar g(\theta_{\tau_{k}})\|^{2}}{8} \lambda  + \norm{\Theta_{2,k}} + \norm{\Theta_{3,k} } + \norm{\Theta_{4,k} }  \notag \\
&\leq& -\frac{\|\nabla \bar g(\theta_{\tau_{k}})\|^{2}}{8} \lambda + \|\nabla \bar g(\theta_{\tau_{k}})\| \epsilon+ \epsilon + \epsilon.
\end{eqnarray*}
Choose $\epsilon = \min \{ \frac{ \lambda \kappa^2} {32}, \frac{\lambda}{32} \}$.  On the one hand, if $\|\nabla \bar g(\theta_{\tau_{k}})\| \geq 1$:
\begin{eqnarray*}
\bar g(\theta_{\sigma_{k}}) - \bar g (\theta_{\tau_{k}})   &\leq& -\frac{\|\nabla \bar g(\theta_{\tau_{k}})\|^{2}}{8} \lambda + \|\nabla \bar g(\theta_{\tau_{k}})\|^2 \epsilon+ \epsilon + \epsilon \notag \\
&\leq& -3 \frac{\|\nabla \bar g(\theta_{\tau_{k}})\|^{2}}{32} \lambda +2 \epsilon \leq  -3 \frac{ \kappa^2 }{32} \lambda + 2 \frac{ \kappa^2 }{32} \lambda \leq  - \frac{ \kappa^2 }{32} \lambda .
\end{eqnarray*}
On the other hand, if $\|\nabla \bar g (\theta_{\tau_{k}})\| \leq 1$, then
\begin{eqnarray*}
\bar g(\theta_{\sigma_{k}}) - \bar g (\theta_{\tau_{k}})   &\leq& -\frac{\|\nabla \bar g(\theta_{\tau_{k}})\|^{2}}{8} \lambda + \epsilon+ \epsilon + \epsilon \notag \\
&\leq& - \frac{4 \kappa^2}{32} \lambda +3 \epsilon \leq  -4 \frac{ \kappa^2 }{32} \lambda + 3 \frac{ \kappa^2 }{32} \lambda \leq  - \frac{ \kappa^2 }{32} \lambda .
\end{eqnarray*}
Finally, let $\gamma = \frac{ \kappa^2 }{32} \lambda$ and the proof of the lemma is complete.

\end{proof}

\begin{lemma} \label{JintervalUpperBound}
Assume Conditions \ref{A:ExtraAssumption}, \ref{A:LyapunovCondition} and \ref{A:Assumption1}. Suppose that there are an infinite number of intervals $I_{k}=[\tau_k,\sigma_{k})$.  There is a fixed constant $\gamma_1 < \gamma$ such that for $k$ large enough,
\begin{align*}
\bar{g}(\theta_{\tau_{k}})-\bar{g}(\theta_{\sigma_{k-1}}) \leq \gamma_1.
\end{align*}
\end{lemma}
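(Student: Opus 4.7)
The plan is to mirror the structure of Lemma \ref{LemmaUpperNegativeBound} but now apply It\^{o}'s formula to $\bar{g}(\theta_t)$ over the interval $J_k = [\sigma_{k-1}, \tau_k)$, on which we have the crucial inequality $\|\nabla \bar{g}(\theta_s)\| < \kappa$ (by the definition of $\tau_k$ as the first hitting time of level $\kappa$). Writing
\begin{align*}
\bar{g}(\theta_{\tau_k}) - \bar{g}(\theta_{\sigma_{k-1}}) = \Theta_{1,k}' + \Theta_{2,k}' + \Theta_{3,k}' + \Theta_{4,k}',
\end{align*}
with the same four-term decomposition as in Lemma \ref{LemmaUpperNegativeBound} (quadratic drift, stochastic integral, quadratic-variation correction, and ``centered'' fluctuation term) but on $J_k$ instead of $I_k$, I would show that each of the four pieces is either non-positive or tends to zero almost surely as $k\to\infty$, so the entire increment is eventually smaller than any fixed $\gamma_1<\gamma$.

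The drift term $\Theta_{1,k}' = -\int_{\sigma_{k-1}}^{\tau_k}\alpha_s\|\nabla \bar g(\theta_s)\|^2 ds \leq 0$ is non-positive and is simply dropped. For the stochastic integral $\Theta_{2,k}' = \int_{\sigma_{k-1}}^{\tau_k}\alpha_s\langle \nabla \bar g(\theta_s),\nabla_\theta f(X_s,\theta_s)\sigma^{-1}dW_s\rangle$, I would exploit that by definition (\ref{Eq:Rs}) one has $R_s=\kappa$ on $J_k$, and therefore $\|\nabla\bar g(\theta_s)\|/R_s<1$ there. Factoring $\kappa$ out gives $\Theta_{2,k}' = \kappa\bigl(M_{\tau_k}-M_{\sigma_{k-1}}\bigr)$, where $M_t=\int_0^t \alpha_s\langle \nabla\bar g(\theta_s)/R_s,\nabla_\theta f(X_s,\theta_s)\sigma^{-1}dW_s\rangle$ is exactly the $L^2$-bounded martingale shown to converge almost surely in the proof of Lemma \ref{LemmaUpperNegativeBound}; hence $\Theta_{2,k}'\to 0$ a.s.

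The correction term $\Theta_{3,k}'$ is the increment over $J_k$ of a non-negative integrand whose full integral on $[0,\infty)$ was shown in Lemma \ref{LemmaUpperNegativeBound} to be almost-surely finite, so $\Theta_{3,k}'\to 0$ as $k\to\infty$. For the ``centered'' fluctuation term $\Theta_{4,k}' = \int_{\sigma_{k-1}}^{\tau_k}\alpha_s\langle \nabla\bar g(\theta_s),\nabla\bar g(\theta_s)-\nabla_\theta g(X_s,\theta_s)\rangle ds$ I would reuse verbatim the Poisson-equation construction from Lemma \ref{LemmaUpperNegativeBound}, with the same auxiliary right-hand side $G(x,\theta)=\langle\nabla\bar g(\theta),\nabla_\theta g(x,\theta)-\nabla\bar g(\theta)\rangle$, only replacing the endpoints $(\tau_k,\sigma_k)$ by $(\sigma_{k-1},\tau_k)$ in the It\^{o} expansion (\ref{Eq:EquivalentExpression1}); every bound there was obtained as a tail of a globally convergent quantity, so the increment over $J_k$ again tends to zero. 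Combining the four bounds, for every $\epsilon>0$ almost surely $\bar g(\theta_{\tau_k})-\bar g(\theta_{\sigma_{k-1}})\le\epsilon$ for $k$ large enough, so one may take $\gamma_1=\gamma/2<\gamma$.

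The main obstacle is the stochastic integral $\Theta_{2,k}'$: the length of $J_k$ is not controlled a priori (unlike on $I_k$, where the constraint $\int_{\tau_k}^{\sigma_k}\alpha_s\,ds\le\lambda$ bounds the quadratic variation directly), so a naive Burkholder--Davis--Gundy estimate on $[\sigma_{k-1},\tau_k]$ is insufficient. The resolution is precisely the renormalization by $R_s$: because $J_k$ is defined by $\|\nabla\bar g(\theta_s)\|<\kappa$, the integrand $\nabla\bar g(\theta_s)/R_s$ is uniformly bounded by $1$ on $J_k$, making $M_t$ an $L^2$-bounded martingale whose a.s.\ convergence forces its increments over $J_k$ to vanish.
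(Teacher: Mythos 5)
Your proposal is correct and follows essentially the same route as the paper: the identical four-term It\^{o} decomposition over $J_k$, dropping the non-positive drift term, controlling the stochastic integral through the $R_s$-normalized $L^2$-bounded martingale whose a.s.\ convergence kills its increments, and reusing the Poisson-equation argument of Lemma \ref{LemmaUpperNegativeBound} for the fluctuation term. The only (cosmetic) differences are that you factor out $\kappa$ rather than $\|\nabla\bar g(\theta_{\sigma_{k-1}})\|$ in the martingale term --- which is if anything cleaner, since $R_s=\kappa$ on $J_k$ --- and your passing remark that the $\Theta_{3,k}'$ integrand is non-negative is not needed (and not quite right, as $\nabla^2_\theta\bar g$ need not be positive semidefinite); the a.s.\ finiteness of $\int_0^\infty$ of its norm suffices, exactly as in the paper.
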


\begin{proof}
First, recall that $ \norm{ \nabla \bar g (\theta_t) } \leq \kappa $ for $ t \in J_k = [ \sigma_{k-1}, \tau_k ]$.  Similar to before, we have that:
\begin{eqnarray}
\bar g(\theta_{\tau_{k}}) - \bar g (\theta_{\sigma_{k-1}}) &=&  -  \int_{\sigma_{k-1}}^{\tau_k} \alpha_{s} \left\| \nabla \bar g (\theta_s) \right\|^2 ds + \int_{\sigma_{k-1}}^{\tau_{k}}  \alpha_{s}  \left<\nabla \bar g (\theta_s),\nabla_{\theta} f(X_s, \theta_s) \sigma^{-1}  d W_s\right>\nonumber\\
&+& \int_{\sigma_{k-1}}^{\tau_{k}} \frac{\alpha_{s}^{2}}{2} \text{tr}\left[ (\nabla_{\theta} f(X_s, \theta_s)\sigma^{-1})(\nabla_{\theta} f(X_s, \theta_s)\sigma^{-1})^{\top}  \nabla^{2}_{\theta}  \bar g (\theta_s) \right]ds \notag \\
&+& \int_{\sigma_{k-1}}^{\tau_{k}} \alpha_{s} \left<\nabla_{\theta} \bar g(\theta_{s}), \nabla_{\theta} \bar{g}(\theta_{s})- \nabla_{\theta} g(X_{s},\theta_{s})\right>ds   \notag \\
& \leq &  \int_{\sigma_{k-1}}^{\tau_{k}}  \alpha_{s}  \left<\nabla \bar g (\theta_s),\nabla_{\theta} f(X_s, \theta_s)\sigma^{-1}   d W_s\right> \nonumber\\
 &+&\int_{\sigma_{k-1}}^{\tau_{k}} \frac{\alpha_{s}^{2}}{2} \text{tr}\left[ (\nabla_{\theta} f(X_s, \theta_s)\sigma^{-1})(\nabla_{\theta} f(X_s, \theta_s)\sigma^{-1})^{\top} \nabla^{2}_{\theta}  \bar g (\theta_s) \right] ds \notag \\
&+& \int_{\sigma_{k-1}}^{\tau_{k}} \alpha_{s} \left<\nabla_{\theta} \bar g(\theta_{s}), \nabla_{\theta} \bar{g}(\theta_{s})- \nabla_{\theta} g(X_{s},\theta_{s})\right>ds.
\label{EqnUpperBoundJ}
\end{eqnarray}
The right hand side (RHS) of equation (\ref{EqnUpperBoundJ}) converges almost surely to $0$ as $k \rightarrow \infty$ as a consequence of similar arguments as given in Lemma \ref{LemmaUpperNegativeBound}. Indeed, the treatment of the second and third terms on the RHS of (\ref{EqnUpperBoundJ}) are exactly the same as in Lemma \ref{LemmaUpperNegativeBound}.  It remains to show that the first term on the RHS of (\ref{EqnUpperBoundJ}) converges almost surely to $0$ as $k \rightarrow \infty$.

\begin{eqnarray}
\int_{\sigma_{k-1}}^{\tau_{k}}  \alpha_{s}  \left<\nabla \bar g (\theta_s),\nabla_{\theta} f(X_s, \theta_s)  \sigma^{-1} d W_s \right>&=& \norm{ \nabla \bar g (\theta_{\sigma_{k-1} }) } \int_{\sigma_{k-1}}^{\tau_{k}}  \alpha_{s}  \left<\frac{ \nabla \bar g (\theta_s) }{\norm{ \nabla \bar g (\theta_{\sigma_{k-1} }) }},\nabla_{\theta} f(X_s, \theta_s) \sigma^{-1}  d W_s\right>    \notag \\
&=& \norm{ \nabla \bar g (\theta_{\sigma_{k-1} }) } \int_{\sigma_{k-1}}^{\tau_{k}}  \alpha_{s} \left<\frac{ \nabla \bar g (\theta_s) }{R_s}, \nabla_{\theta} f(X_s, \theta_s)  \sigma^{-1} d W_s\right>.
\label{MartIntJ}
\end{eqnarray}
As shown in Lemma \ref{LemmaUpperNegativeBound}, $\int_{\sigma_{k-1}}^{\tau_{k}}  \alpha_{s}   \left<\frac{ \nabla \bar g (\theta_s) }{R_s},\nabla_{\theta}f(X_s, \theta_s) \sigma^{-1}  d W_s\right> \rightarrow 0$ as $k \rightarrow \infty$ almost surely.  Finally, note that $\norm{ \nabla \bar g (\theta_{\sigma_{k-1} }) }  \leq \kappa$ (except when $\sigma_{k-1} = \tau_k$, in which case the interval $J_k$ is length 0 and hence the integral (\ref{MartIntJ}) over $J_k$ is $0$).  Then,
$\int_{\sigma_{k-1}}^{\tau_{k}}  \alpha_{s}   \left<\nabla \bar g (\theta_s),\nabla_{\theta}f(X_s, \theta_s) \sigma^{-1}  d W_s\right> \rightarrow 0$ as $k \rightarrow \infty$ almost surely.

Therefore, with probability one, $\bar g(\theta_{\tau_{k}}) - \bar g (\theta_{\sigma_{k-1}})  \leq \gamma_1 < \gamma$ for sufficiently large $k$.

\end{proof}

\begin{proof}[Proof of Theorem \ref{T:MainTheorem}]
Choose a $\kappa > 0$.  First, consider the case where there are a finite number of times $\tau_k$.  Then, there is a finite $T$ such that $\norm { \nabla \bar g (\theta_{t}) } < \kappa $ for $t \geq T$.  Now, consider the other case where there are an infinite number of times $\tau_k$ and use Lemmas \ref{LemmaUpperNegativeBound}  and \ref{JintervalUpperBound}.  With probability one,
\begin{eqnarray}
\bar g(\theta_{\sigma_{k}}) - \bar g (\theta_{\tau_{k}}) & \leq &  -  \gamma = - \frac{ \kappa^2 }{32} \lambda , \notag \\
\bar g(\theta_{\tau_{k}}) - \bar g (\theta_{\sigma_{k-1}}) & \leq &  \gamma_1 < \gamma ,
\label{BoundsGamma}
\end{eqnarray}
for sufficiently large $k$.  Choose a $K$ such that (\ref{BoundsGamma}) holds for $k \geq K$. This leads to:
\begin{eqnarray*}
\bar g(\theta_{\tau_{n+1}}) - \bar g (\theta_{\tau_{K}}) = \sum_{k = K}^n \bigg{[} \bar g(\theta_{\sigma_{k}}) - \bar g (\theta_{\tau_{k}}) + \bar g(\theta_{\tau_{k+1}}) - \bar g (\theta_{\sigma_{k}}) \bigg{]} \leq  \sum_{k = K}^n ( - \gamma + \gamma_1 )<0 .
\end{eqnarray*}
Let $n \rightarrow \infty$ and then $\bar g(\theta_{\tau_{n+1}}) \rightarrow - \infty$.  However, we also have that by definition $\bar g(\theta) \geq 0$.  This is a contradiction, and therefore almost surely there are a finite number of times $\tau_k$.

Consequently, there exists a finite time $T$ (possibly random) such that almost surely $\norm { \nabla \bar g (\theta_{t}) } < \kappa$ for $t \geq T$.  Since the original $\kappa>0$ was arbitrarily chosen, this shows that $\norm {\nabla \bar g (\theta_{t}) }  \rightarrow 0$ as  $t \rightarrow \infty$ almost surely.
\end{proof}

\section{Estimating the Coefficient Function of the Diffusion Term and Generalizations} \label{DiffusionCoefficientFunction}
We consider a diffusion $X_t \in \mathcal{X}=\mathbb{R}^{m}$:
\begin{eqnarray}
d X_t = f^{\ast}(X_t) dt + \sigma^{\ast}(X_t) d W_t.
\label{ClassofEqns2}
\end{eqnarray}
The goal is to statistically estimate a model $f(x, \theta)$ for $f^{\ast}(x)$ as well as a model $\sigma(x, \nu)\sigma^{\top}(x, \nu)$ for the diffusion coefficient $\sigma^{\ast}(x)\sigma^{\ast, \top}(x)$ where $\theta \in \mathbb{R}^n$ and $\nu \in \mathbb{R}^k$.  $W_t \in \mathbb{R}^m$ is a standard Brownian motion and $\sigma^{\ast}(\cdot) \in \mathbb{R}^{m \times m}$. The functions $f(x, \theta)$, $\sigma(x, \nu)$, $f^{\ast}(x)$, and $\sigma^{\ast}(x)$ may be non-convex.

The stochastic gradient descent update in continuous time follows the stochastic differential equations:
\begin{eqnarray}
d \theta_t &=&  \alpha_{t} \nabla_{\theta}  f(X_t, \theta_t) \big{ [}  d X_t -    f(X_t, \theta_t)  dt \big {]}, \notag \\
d \nu_t &=& \alpha_t \sum_{i,j}^m  \nabla_{\nu} \big{(} ( \sigma(X_t, \nu_t ) \sigma^{\top}(X_t, \nu_t ))_{i,j} \big{)}  \big { [}  d \la X_t, X_t \ra_{i,j}  - ( \sigma(X_t, \nu_t ) \sigma^{\top}(X_t, \nu_t ) )_{i,j} dt  \big{]} ,  \notag
\label{SDEMain2}
\end{eqnarray}
where $\la X_t, X_t \ra \in \mathbb{R}^{m \times m}$ is the quadratic variation matrix of $X$.  Since we observe the path of $X_t$, we also observe the path of the quadratic variation $\la X_t, X_t \ra$.

Let us set:
\begin{eqnarray}
g(x,\theta)   &=&  \frac{1}{2}\left\|f(x,\theta)-f^{\ast}(x)\right\|^{2}  \notag \\
w(x, \nu)   &=&  \frac{1}{2}\left\|    \sigma(x, \nu) \sigma^{\top}(x, \nu )   -   \sigma^{\ast}(x)   \sigma^{\ast, \top}(x)    \right\|^{2}.  \notag
\end{eqnarray}

We assume that $\sigma^{\ast}(x)$ is such that the process $X_{t}$ is ergodic with a unique invariant measure (for example one may assume that it is non-degenerate, i.e., bounded away from zero and bounded by above). In addition, we assume that $w(x,\nu)$ satisfies the same assumptions as $g(x,\theta)$ does in Condition \ref{A:Assumption1}.

From the previous results in Section \ref{S:ProofMainTheorem}, $\lim_{t\rightarrow\infty}\|\nabla \bar{g}(\theta_{t})\|=0$ as $t \rightarrow \infty$ with probability 1.  Let's study the convergence of the stochastic gradient descent algorithm (\ref{SDEMain2}) for $\nu_t$.  By It\^{o}'s formula,
\begin{eqnarray*}
\bar w(\nu_{\sigma}) - \bar w (\nu_{\tau}) &=&  -  \int_{\tau}^{\sigma} \alpha_{s} \left\| \nabla \bar w (\nu_s) \right\|^2 ds +\int_{\tau}^{\sigma} \alpha_{s} \left<\nabla_{\nu} \bar w(\nu_{s}), \nabla_{\nu} \bar{w}(\nu_{s})- \nabla_{\nu} w(X_{s},\nu_{s})\right>ds
\end{eqnarray*}
Applying exactly the same procedure as in Section \ref{S:ProofMainTheorem}, $\lim_{t\rightarrow\infty}\|\nabla \bar{w}(\nu_{t})\|=0$ as $t \rightarrow \infty$ with probability 1.  We omit the details as the proof is exactly the same as in Section \ref{S:ProofMainTheorem}.

Notice also that  $\sigma^{\ast}(x)$ is not identifiable; for example, $X_t$ has the same distribution under the diffusion coefficient $-\sigma^{\ast}(x)$.  Only $\sigma^{\ast}(x) \sigma^{\ast, \top}(x)$ is identifiable.  We are therefore essentially estimating a model $\sigma(x, \nu) \sigma^{\top}(x, \nu)$  for $\sigma^{\ast}(x) \sigma^{\ast, \top}(x)$.

We close this section with the following remark.
\begin{remark}
The proof of Theorem \ref{T:MainTheorem} makes it clear that if appropriate assumptions on $\nabla_{\theta}f$ and $\nabla_{g}\theta$ are made such that $\sup_{t>0}\mathbb{E}|\theta_{t}^{q}|<C$ for appropriate $0<q,C<\infty$, then one can relax Condition \ref{A:Assumption1} on $\nabla_{\theta}g$ to allow at least linear growth with respect to $\theta$.
\end{remark}

\section{Model Estimation: Numerical Analysis} \label{NumericalAnalysis1}
We implement SGDCT for several applications and numerically analyze the convergence.  Section \ref{OU} studies continuous-time stochastic gradient descent for the Ornstein-Uhlenbeck process, which is widely used in finance, physics, and biology.  Section \ref{OU2} studies the multidimensional Ornstein-Uhlenbeck process.  Section \ref{Burger} estimates the diffusion coefficient in Burger's equation with continuous-time stochastic gradient descent.  Burger's equation is a widely-used nonlinear partial differential equation which is important to fluid mechanics, acoustics, and aerodynamics.  Burger's equation is extensively used in engineering.  In Section \ref{RLexample}, we show how SGDCT can be used for reinforcement learning.  In the final example, the drift and volatility functions for the multidimensional CIR process are estimated.  The CIR process is widely used in financial modeling.

\subsection{Ornstein-Uhlenbeck process} \label{OU}
The Ornstein-Uhlenbeck (OU) process $X_t \in \mathbb{R}$ satisfies the stochastic differential equation:
\begin{eqnarray}
d X_t = c (m - X_t) dt + d W_t.
\label{OUeqn1d}
\end{eqnarray}
We use continuous-time stochastic gradient descent to learn the parameters $\theta = (c,m) \in \mathbb{R}^2$.

For the numerical experiments, we use an Euler scheme with a time step of $10^{-2}$.  The learning rate is $\alpha_t = \min(\alpha, \alpha / t )$ with $\alpha = 10^{-2}$.  We simulate data from (\ref{OUeqn1d}) for a particular $\theta^{\ast}$ and the stochastic gradient descent attempts to learn a parameter $\theta_t$ which fits the data well.  $\theta_t$ is the statistical estimate for $\theta^{\ast}$ at time $t$.  If the estimation is accurate, $\theta_t$ should of course be close to $\theta^{\ast}$.  This example can be placed in the form of the original class of equations (\ref{ClassofEqns}) by setting $f(x, \theta) = c (m - x)$ and $f^{\ast}(x) = f(x, \theta^{\ast})$.

We study $10,500$ cases.  For each case, a different $\theta^{\ast}$ is generated uniformly at random in the range $[1, 2] \times [1,2]$.  For each case, we solve for the parameter $\theta_t$ over the time period $[0,T]$ for $T = 10^6$.  To summarize:
\begin{itemize}
\item For cases n = 1 to 10,500
\begin{itemize}
\item Generate a random $\theta^{\ast}$ in $[1, 2] \times [1,2]$
\item Simulate a single path of $X_t$ given $\theta^{\ast}$ and simultaneously solve for the path of $\theta_t$ on $[0,T]$
\end{itemize}
\end{itemize}

The accuracy of $\theta_t$ at times $t = 10^2, 10^3, 10^4, 10^5$, and $10^6$ is reported in Table \ref{OUerror}.  Figures \ref{OUerrorPlot} and \ref{OUMSEerrorPlot1d} plot the mean error in percent and mean squared error (MSE) against time.   In the table and figures, the ``error" is $| \theta_t^n - \theta^{\ast,n}|$ where $n$ represents the $n$-th case.  The ``error in percent" is $100 \times \frac{ | \theta_t^n - \theta^{\ast,n}| }{ | \theta^{\ast,n} |}$.  The ``mean error in percent" is the average of these errors, i.e. $\frac{100}{N} \sum_{n=1}^N \frac{ | \theta_t^n - \theta^{\ast,n}| }{ | \theta^{\ast,n} |}$.

\begin{figure}[h!]
\centering
\includegraphics[scale=0.8]{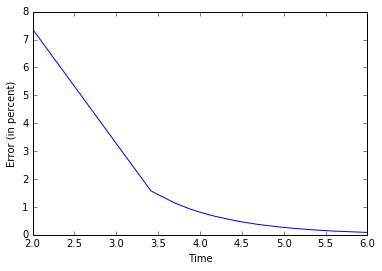}
\caption{Mean error in percent plotted against time.  Time is in log scale.}
\label{OUerrorPlot}
\end{figure}

\begin{figure}[h!]
\centering
\includegraphics[scale=0.8]{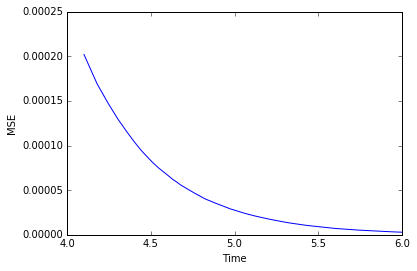}
\caption{Mean squared error plotted against time.  Time is in log scale.}
\label{OUMSEerrorPlot1d}
\end{figure}

\begin{table}[ht!]
\begin{center}
 \begin{tabular}{  |c| c  | c | c| c | c |}
   \hline
Error/Time &  $10^{2}$ & $10^3$ & $10^4$ & $10^5$ & $10^6$  \\ \hline \hline
Maximum Error & .604   & .2615 & .0936  & .0349  & .0105  \\ \hline
99\% quantile of error & .368  & .140 & .0480  & .0163  & .00542   \\ \hline
99.9\% quantile of error &  .470  & .1874 & .0670 & .0225   & .00772 \\ \hline
Mean squared error & $1.92 \times 10^{-2} $  &   $2.28 \times 10^{-3}$ & $2.52 \times 10^{-4}$ & $2.76 \times 10^{-5}$ & $2.90 \times 10^{-6}$   \\ \hline
Mean Error in percent & 7.37  & 2.497 & 0.811  & 0.264 & 0.085   \\ \hline
Maximum error in percent & 59.92 & 20.37 & 5.367   & 1.79  & 0.567    \\ \hline
99\% quantile of error in percent & 25.14  & 9.07 & 3.05  & 1.00 & 0.323  \\ \hline
99.9\% quantile of error in percent & 34.86   & 12.38   & 4.12  & 1.30   & 0.432   \\ \hline
 \end{tabular}
\end{center}
\caption{\label{OUerror}  Error at different times for the estimate $\theta_t$ of $\theta^{\ast}$ across $10,500$ cases.  The ``error" is $| \theta_t^n - \theta^{\ast,n}|$ where $n$ represents the $n$-th case.  The ``error in percent" is $100 \times \frac{ | \theta_t^n - \theta^{\ast,n}| }{ | \theta^{\ast,n} |}.$}
\end{table}

Finally, we also track the objective function $\bar g(\theta_t) $ over time.  Figure \ref{OUbarGplot} plots the error $\bar g(\theta_t) $ against time.  Since the limiting distribution $\pi(x) $ of (\ref{OUeqn}) is Gaussian with mean $m^{\ast}$ and variance $\frac{1}{2 c ^{\ast} }$, we have that:

\begin{eqnarray*}
\bar g(\theta) &=& \int  \bigg ( c^{\ast}( m^{\ast} - x) - c (m -x) \bigg ) ^2  \pi(x) dx  \notag \\
&=& (c^{\ast} m^{\ast} - c m )^2  + (c^{\ast} - c)^2 ( \frac{1}{2 c^{\ast}} + (m^{\ast})^2) + 2 (c^{\ast} m^{\ast} - c m ) (c-c^{\ast} ) m^{\ast}
\end{eqnarray*}

\begin{figure}[h!]
\centering
\includegraphics[scale=0.8]{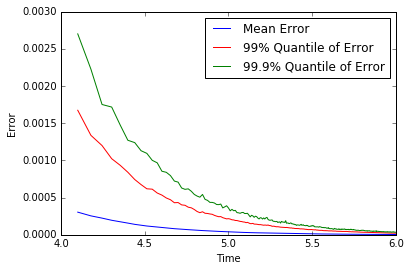}
\caption{The error $\bar g(\theta_t)$ plotted against time.  The mean error and the quantiles of the error are calculated from the 10,500 cases.  Time is in log scale.}
\label{OUbarGplot}
\end{figure}

\subsection{Multidimensional Ornstein-Uhlenbeck process} \label{OU2}
The multidimensional Ornstein-Uhlenbeck process $X_t \in \mathbb{R}^d$ satisfies the stochastic differential equation:
\begin{eqnarray}
d X_t =  (M - A X_t) dt + d W_t.
\label{OUeqn}
\end{eqnarray}
We use continuous-time stochastic gradient descent to learn the parameters $\theta = (M,A) \in \mathbb{R}^d \times \mathbb{R}^{d \times d}$.

For the numerical experiments, we use an Euler scheme with a time step of $10^{-2}$.  The learning rate is $\alpha_t = \min(\alpha, \alpha / t )$ with $\alpha = 10^{-1}$.  We simulate data from (\ref{OUeqn}) for a particular $\theta^{\ast}  = (M^{\ast}, A^{\ast})$ and the stochastic gradient descent attempts to learn a parameter $\theta_t$ which fits the data well.  $\theta_t$ is the statistical estimate for $\theta^{\ast}$ at time $t$.  If the estimation is accurate, $\theta_t$ should of course be close to $\theta^{\ast}$.  This example can be placed in the form of the original class of equations (\ref{ClassofEqns}) by setting $f(x, \theta) =  M -A x$ and $f^{\ast}(x) = f(x, \theta^{\ast})$.

The matrix $A^{\ast}$ must be generated carefully to ensure that $X_t$ is ergodic and has a stable equilibrium point.  If some of $A^{\ast}$'s eigenvalues have negative real parts, then $X_t$ can become unstable and grow arbitrarily large.  Therefore, we randomly generate matrices $A^{\ast}$ which are strictly diagonally dominant.  $A^{\ast}$'s eigenvalues are therefore guaranteed to have positive real parts and $X_t$ will be ergodic.  To generate random strictly diagonally dominant matrices $A^{\ast}$, we first generate $A_{i,j}^{\ast}$ uniformly at random in the range $[1,2]$ for $i \neq j$.  Then, we set $A_{i,i}^{\ast} = \sum_{j \neq i} A_{i,j}^{\ast} + U_{i,i}$ where $U_{i,i}$ is generated randomly in $[1,2]$.  $M_i^{\ast}$ for $i =1,\ldots, d$ is also generated randomly in $[1,2]$.

We study $525$ cases and analyze the error in Table \ref{OU2error}.  Figures \ref{OU2errorPlot} and \ref{OUMSEerrorPlot} plot the error over time.

\begin{table}[ht!]
\begin{center}
 \begin{tabular}{  |c| c  | c | c| c | c |}
   \hline
Error/Time &  $10^{2}$ & $10^3$ & $10^4$ & $10^5$ & $10^6$  \\ \hline \hline
Maximum Error & 2.89   & .559 & .151  & .043  & .013  \\ \hline
99\% quantile of error & 2.19  & .370 & .0957  & .0294  & .00911   \\ \hline
99.9\% quantile of error &  2.57  & .481 & .118 & .0377   & .0117 \\ \hline
Mean squared error & $8.05 \times 10^{-1} $  &   $2.09 \times 10^{-2}$ & $1.38 \times 10^{-3}$ & $1.29 \times 10^{-4}$ & $1.25 \times 10^{-5}$   \\ \hline
Mean Error in percent & 34.26  & 6.18 & 1.68  &  0.52 & 0.161   \\ \hline
Maximum error in percent & 186.3 & 41.68 &  10.98   & 3.81  & 1.03    \\ \hline
99\% quantile of error in percent & 109.2  & 23.9 & 6.98  & 2.15 & 0.657  \\ \hline
99.9\% quantile of error in percent & 141.2   & 31.24   & 8.64  & 2.84  & 0.879   \\ \hline
 \end{tabular}
\end{center}
\caption{\label{OU2error}  Error at different times for the estimate $\theta_t$ of $\theta^{\ast}$ across $525$ cases.  The ``error" is $| \theta_t^n - \theta^{\ast,n}|$ where $n$ represents the $n$-th case.  The ``error in percent" is $100 \times \frac{ | \theta_t^n - \theta^{\ast,n}| }{ | \theta^{\ast,n} |}.$}
\end{table}

\begin{figure}[h!]
\centering
\includegraphics[scale=0.8]{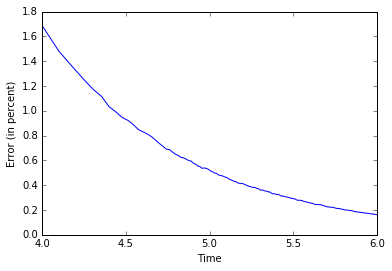}
\caption{Mean error in percent plotted against time.  Time is in log scale.}
\label{OU2errorPlot}
\end{figure}

\begin{figure}[h!]
\centering
\includegraphics[scale=0.8]{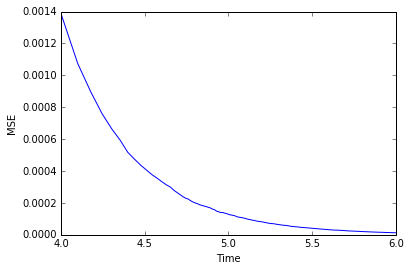}
\caption{Mean squared error plotted against time.  Time is in log scale.}
\label{OUMSEerrorPlot}
\end{figure}

\subsection{Burger's Equation} \label{Burger}
The stochastic Burger's equation that we consider is given by:
\begin{eqnarray}
\frac{\partial u}{\partial t}(t,x) = \theta \frac{\partial^2 u}{\partial x^2} - u(t,x) \frac{\partial u}{\partial x}(t,x) + \sigma \frac{ \partial^2 W(t,x)}{\partial t \partial x},
\label{BurgerStochastic}
\end{eqnarray}
where $x \in [0,1]$ and $W(t,x)$ is a Brownian sheet.  The finite-difference discretization of (\ref{BurgerStochastic}) satisfies a system of nonlinear stochastic differential equations (for instance, see \cite{DavieGaines} or \cite{Gyongy}).  We use continuous-time stochastic gradient descent to learn the diffusion parameter $\theta$.

We use the following finite difference scheme for Burger's equation:
\begin{eqnarray}
d u (t, x_i)  = \theta \frac{  u(t, x_{i+1} ) - 2 u(t, x_i) + u(t, x_{i-1}) }{ \Delta x^2} dt - u(t, x_i) \frac{ u(t, x_{i+1}) - u(t, x_{i-1}) }{ 2 \Delta x}dt +  \frac{\sigma}{ \sqrt{\Delta x} } d W_t^i,
\label{FiniteDifference0}
\end{eqnarray}

For our numerical experiment, the boundary conditions $u(t,x=0) = 0$ and $u(t, x = 1) = 1$ are used and $\sigma = 0.1$.   (\ref{FiniteDifference0}) is simulated with the Euler scheme (i.e., we solve Burger's equation with explicit finite difference).  A spatial discretization of $\Delta x = .01$ and a time step of $10^{-5}$ are used.  The learning rate is $\alpha_t = \min(\alpha, \alpha / t )$ with $\alpha = 10^{-3}$.  The small time step is needed to avoid instability in the explicit finite difference scheme.  We simulate data from (\ref{BurgerStochastic}) for a particular diffusion coefficient $\theta^{\ast}$ and the stochastic gradient descent attempts to learn a diffusion parameter $\theta_t$ which fits the data well.  $\theta_t$ is the statistical estimate for $\theta^{\ast}$ at time $t$.  If the estimation is accurate, $\theta_t$ should of course be close to $\theta^{\ast}$.

This example can be placed in the form of the original class of equations (\ref{ClassofEqns}).  Let $f_i$ be the $i$-th element of the function $f$.  Then, $f_i(u, \theta) = \theta \frac{  u(t, x_{i+1} ) - 2 u(t, x_i) + u(t, x_{i-1}) }{ \Delta x^2} - u(t, x_i) \frac{ u(t, x_{i+1}) - u(t, x_{i-1}) }{ 2 \Delta x} $.  Similarly, let $f_i^{\ast}$ be the $i$-th element of the function $f^{\ast}$.  Then, $f_i^{\ast}(u) = f_i(u, \theta^{\ast})$.

We study $525$ cases.  For each case, a different $\theta^{\ast}$ is generated uniformly at random in the range $[.1, 10]$.  This represents a wide range of physical cases of interest, with $\theta^{\ast}$ ranging over two orders of magnitude.  For each case, we solve for the parameter $\theta_t$ over the time period $[0,T]$ for $T = 100$.

The accuracy of $\theta_t$ at times $t = 10^{-1}, 10^0, 10^1,$ and $10^2$ is reported in Table \ref{Burgererror}.  Figures \ref{BurgererrorPlot} and \ref{BurgerMSEerrorPlot} plot the mean error in percent and mean squared error against time.  The convergence of $\theta_t$ to $\theta^{\ast}$ is fairly rapid in time.

\begin{table}[ht!]
\begin{center}
 \begin{tabular}{  |c| c  | c | c| c  |}
   \hline
Error/Time &  $ 10^{-1} $ & $ 10^0 $ & $10^1$ & $10^2$   \\ \hline \hline
Maximum Error & .1047   & .106 & .033  & .0107  \\ \hline
99\% quantile of error & .08  & .078 & .0255  & .00835   \\ \hline
Mean squared error & $1.00 \times 10^{-3} $  &   $9.25 \times 10^{-4}$ &  $1.02 \times 10^{-4}$ & $1.12 \times 10^{-5}$   \\ \hline
Mean Error in percent & 1.26 & 1.17 &  0.4  & 0.13   \\ \hline
Maximum error in percent & 37.1 & 37.5 & 9.82     & 4.73    \\ \hline
99\% quantile of error in percent & 12.6  & 18.0  & 5.64   & 1.38  \\ \hline
 \end{tabular}
\end{center}
\caption{\label{Burgererror}  Error at different times for the estimate $\theta_t$ of $\theta^{\ast}$ across $525$ cases.  The ``error" is $| \theta_t^n - \theta^{\ast,n}|$ where $n$ represents the $n$-th case.  The ``error in percent" is $100 \times  | \theta_t^n - \theta^{\ast,n}| / | \theta^{\ast,n} |.$ }
\end{table}

\begin{figure}[h!]
\centering
\includegraphics[scale=0.8]{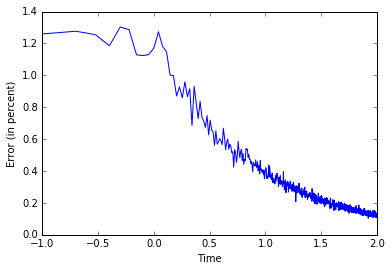}
\caption{Mean error in percent plotted against time.  Time is in log scale.}
\label{BurgererrorPlot}
\end{figure}

\begin{figure}[h!]
\centering
\includegraphics[scale=0.8]{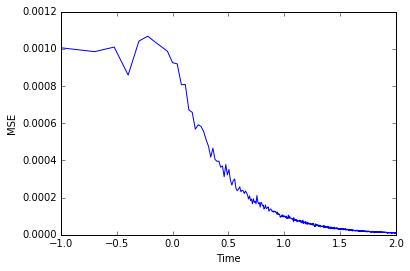}
\caption{Mean squared error plotted against time.  Time is in log scale.}
\label{BurgerMSEerrorPlot}
\end{figure}

\subsection{Reinforcement Learning} \label{RLexample}
We consider the classic reinforcement learning problem of balancing a pole on a moving cart (see \cite{Barto83}).  The goal is to balance a pole on a cart and to keep the cart from moving outside the boundaries via applying a force of $\pm 10$ Newtons.

The position $x$ of the cart, the velocity $\dot x$ of the cart, angle of the pole $\beta$, and angular velocity $\dot \beta$ of the pole are observed.  The dynamics of $s = (x, \dot x, \beta, \dot \beta)$ satisfy a set of ODEs (see \cite{Barto83}):
\begin{eqnarray}
\ddot \beta_t &=& \frac{ g \sin \beta_t + \cos \beta_t [  \frac{-F_t - ml \dot \beta_t^2 \sin \beta_t + \mu_c \textrm{sgn}( \dot x_t)}{m_c + m}   ] - \frac{\mu_p \dot \beta_t}{m l}  }{l [ \frac{4}{3} - \frac{m}{m_c} \frac{ \cos^2 \beta_t}{m_c + m} ] }, \notag \\
\ddot x_t &=&  \frac{F_t + ml [ \dot \beta_t^2 \sin \beta_t - \ddot \beta_t \cos \beta_t ]  - \mu_c \textrm{sgn}( \dot x_t) }{  m_c + m},
\label{CartpoleDynamicsODEs}
\end{eqnarray}
where $g$ is the acceleration due to gravity, $m_c$ is the mass of the cart, $m$ is the mass of the pole, $2l$ is the length of the pole,
$\mu_c$ is the coefficient of friction of the cart on the ground, $\mu_p$ is the coefficient of friction of the pole on the cart, and $F_t \in \{-10, 10 \}$ is the force
applied to the cart.

For this example, $f^{\ast}(s) = (\dot x, \ddot x, \dot \beta, \ddot \beta)$.  The model $f(s, \theta) = ( f_1(s, \theta), f_2(s, \theta), f_3(s, \theta), f_4(s, \theta) )$ where $f_i(s, \theta)$ is a single-layer neural network with rectified linear units.

\begin{eqnarray}
f_i(s, \theta) = W^{2,i} h(W^{1,i} s + b^{1,i}) +b^{2,i},
\end{eqnarray}
where $\theta = \{ W^{2,i}, W^{1,i}, b^{1,i}, b^{2,i} \}_{i=1}^4$ and $h(z) = (\sigma(z_1), \ldots, \sigma(z_d))$ for $z \in \mathbb{R}^d$.  The function $\sigma: \mathbb{R} \rightarrow \mathbb{R}$ is a rectified linear unit (ReLU): $\sigma(v) = \max ( v, 0)$.  We learn the parameter $\theta$ using continuous-time stochastic gradient descent.

The boundary is $x = \pm 2.4$ meters and the pole must not be allowed to fall below $\beta = \frac{24}{360\pi}  $ radians (the frame of reference is chosen such that the perfectly upright is $0$ radians).  A reward of $+1$ is received every $0.02$ seconds if $\norm{x} \leq 2.4$ and $\norm{\theta} \leq  \frac{24}{360\pi} $.  A reward of $-100$ is received (and the episode ends) if the cart moves beyond $x = \pm 2.4$ or the pole falls below $\beta = \frac{24}{360\pi}$ radians.   The sum of these rewards across the entire episode is the reward for that episode.  The initial state $(x, \dot x, \beta, \dot \beta)$ at the start of an episode is generated uniformly at random in $[-.05, .05]^4$.  For our numerical experiment, we assume that the rule for receiving the rewards and the distribution of the initial state are both known.  An action of $\pm 10$ Newtons may be chosen every $0.02$ seconds.  This force is then applied for the duration of the next $0.02$ seconds.  The system (\ref{CartpoleDynamicsODEs}) is simulated using an Euler scheme with a time step size of $10^{-3}$ seconds.

The goal, of course, is to statistically learn the optimal actions in order to achieve the highest possible reward.  This requires both: 1) statistically learning the physical dynamics of $(x, \dot x, \beta, \dot \beta)$ and 2) finding the optimal actions given these dynamics in order to achieve the highest possible reward.  The dynamics $(x, \dot x, \beta, \dot \beta)$ satisfy the set of ODEs (\ref{CartpoleDynamicsODEs}); these dynamics can be learned using continuous-time stochastic gradient descent.  We use a neural network for $f$.  Given the estimated dynamics $f$, we use a policy gradient method to estimate the optimal actions.  The approach is summarized below.

\begin{itemize}
\item For episodes $0,1, 2, \ldots$:
\begin{itemize}
\item For time $[0, T_{\textrm{end of episode}}]$:
\begin{itemize}
\item Update the model $f(s, \theta)$ for the dynamics using continuous-time stochastic gradient descent.
\end{itemize}
\item Periodically update the optimal policy $\mu(s,a, \theta^{\mu})$ using policy gradient method.  The optimal policy is learned using data simulated from the model $f(s, \theta)$.  Actions are randomly selected via the policy $\mu$.
\end{itemize}
\end{itemize}

The policy $\mu$ is a neural network with parameters $\theta^{\mu}$.  We use a single hidden layer with rectified linear units followed by a softmax layer for $\mu(s,a, \theta^{\mu})$ and train it using policy gradients.\footnote{Let $r_{e,t}$ be the reward for episode $e$ at time $t$.  Let $R_{t,e} = \sum_{t' = t+1}^{T_{\textrm{end of episode}} } \gamma^{t'-t} r_{e,t'}$ be the cumulative discounted reward from episode $e$ after time $t$ where $\gamma \in [0,1]$ is the discount factor.  Stochastic gradient descent is used to learn the parameter $\theta^{\mu}$: $\theta^{\mu} \leftarrow \theta^{\mu} + \eta_{e}R_{t,e} \frac{\partial}{\partial \theta^{\mu}} \log \mu(s_t, a_t, \theta^{\mu})$ where $\eta_e$ is the learning rate.   In practice, the cumulative discounted rewards are often normalized across an episode.}  The policy $\mu(s,a, \theta^{\mu})$ gives the probability of taking action $a$ conditional on being in the state $s$.
\begin{eqnarray}
\mathbb{P}[ F_t = 10 | s_t = s ] = \mu(s_t, 10, \theta^{\mu}) =  \sigma_0(W^2 h(W^1 s + b^1) + b^2 ),
\end{eqnarray}
where $\sigma_0(v) = \frac{e^v}{1 + e^v}$.  Of course, $ \mathbb{P}[ F_t = -10 | s_t = s ] = \mu(s, -10, \theta^{\mu}) = 1-\mu(s, 10, \theta^{\mu}) $.

$525$ cases are run, each for $25$ hours.  The optimal policy is learned using the estimated dynamics $f(s, \theta)$ and is updated every $5$ episodes.  Table \ref{CartpoleReward} reports the results at fixed episodes using continuous-time stochastic gradient descent.  Table \ref{CartpoleMinEpisode} reports statistics on the number of episodes required until a target episodic reward ($100$, $500$, $1000$) is first achieved.

\begin{table}[ht!]
\begin{center}
 \begin{tabular}{  |c| c  | c | c| c  | c|}
   \hline
Reward/Episode &  $ 10 $ & $ 20 $ & $30$ & $40$  & $45$   \\ \hline \hline
Maximum Reward & -20   & 981 & $2.21 \times 10^4$ & $ 6.64 \times 10^5 $ & $9.22 \times 10^5$ \\ \hline
90\% quantile of reward & -63  & 184 & 760  & 8354   & $1.5 \times 10^4$ \\ \hline
Mean reward & -78  &   67 &  401 & 5659 & $1.22 \times 10^4$   \\ \hline
10\% quantile of reward & -89 & -34 &  36  & 69  & 93   \\ \hline
Minimum reward & -92 & -82  & -61   & -46 & -23 \\ \hline
 \end{tabular}
\end{center}
\caption{\label{CartpoleReward}  Reward at the $k$-th episode across the $525$ cases using continuous-time stochastic gradient descent to learn the model dynamics.}
\end{table}

\begin{table}[ht!]
\begin{center}
 \begin{tabular}{  |c| c  | c | c| }
   \hline
Number of episodes/Target reward &  $ 100 $ & $ 500 $ & $1000$   \\ \hline \hline
Maximum & 39   & 134 & 428 \\ \hline
90\% quantile & 23  & 49 & 61 \\ \hline
Mean & 18 &   34 &  43   \\ \hline
10\% quantile & 13 & 21 &  26    \\ \hline
Minimum & 11 & 14  & 17    \\ \hline
 \end{tabular}
\end{center}
\caption{\label{CartpoleMinEpisode}  For each case, we record the number of episodes required until the target reward is first achieved using continuous-time stochastic gradient descent.  Statistics (maximum, quantiles, mean, minimum) for the number of episodes required until the target reward is first achieved.}
\end{table}

Alternatively, one could directly apply policy gradient to learn the optimal action using the observed data.  This approach does not use continuous-time stochastic gradient descent to learn the model dynamics, but instead directly learns the optimal policy from the data.  Again using 525 cases, we report the results in Table \ref{CartpoleRewardPG} for directly learning the optimal policy without using continuous-time stochastic gradient descent to learn the model dynamics.  Comparing Tables \ref{CartpoleReward} and \ref{CartpoleRewardPG}, it is clear that using continuous-time stochastic gradient descent to learn the model dynamics allows for the optimal policy to be learned significantly more quickly.  The rewards are much higher when using continuous-time stochastic gradient descent (see Table \ref{CartpoleReward}) than when not using it (see Table \ref{CartpoleRewardPG}).

\begin{table}[ht!]
\begin{center}
 \begin{tabular}{  |c| c  | c | c| c  | c|c| c|}
   \hline
Reward/Episode &  $ 10 $ & $ 20 $ & $30$ & $40$  & $100$  &  500   &  750    \\ \hline \hline
Maximum Reward & 51   &  1 & 15 & 77 & 121 &  1748   &  $1.91 \times 10^5$  \\ \hline
90\% quantile of reward & -52  & -48 & -42  & 8354   & -11   &  345   &  2314  \\ \hline
Mean reward & -73  &   -72 &  -69 & -68 & -53    &  150  &  1476 \\ \hline
10\% quantile of reward & -88 & -88 &  -87 & 69  & -83    & -1     &  63  \\ \hline
Minimum reward & -92 & -92  & -92   & -92 & -92   &    -81 & -74   \\ \hline
 \end{tabular}
\end{center}
\caption{\label{CartpoleRewardPG}  Reward at the $k$-th episode across the $525$ cases using policy gradient to learn the optimal policy.}
\end{table}

\subsection{Estimating both the drift and volatility functions for the multidimensional CIR process}  \label{CIR}
We now implement an example where SGDCT is used to estimate both the drift function and the volatility function.  The multidimensional CIR process $X_t \in \mathbb{R}^d$ is:
\begin{eqnarray}
d X_t = c( m- X_t ) dt + \sqrt{X_t} \odot \sigma d W_t,
\label{CIR1}
\end{eqnarray}
where $\odot$ is element-wise multiplication, $m \in \mathbb{R}^d$, $c,\sigma \in \mathbb{R}^{d \times d}$, $W_t \in \mathbb{R}^d$, with $c$ being a positive definite matrix.  The CIR process is often used
for modeling interest rates.

In equation (\ref{CIR1}), $f(x, \theta) = c (m -x )$ where $\theta = (c, m)$.  $f^{\ast}(x) = f(x, \theta^{\ast})$ where $\theta^{\ast} = (c^{\ast}, m^{\ast})$.  The volatility model is $\sigma(x, \nu) = \sqrt{x} \odot \nu$ and $\sigma^{\ast}(x) = \sigma(x, \nu^{\ast})$ where $\nu, \nu^{\ast} \in \mathbb{R}^{d \times d}$.  Table \ref{CIRerror} reports the accuracy of SGDCT for estimating the drift and volatility functions of the CIR process.

\begin{table}[ht!]
\begin{center}
 \begin{tabular}{  |c| c  | c | c| }
   \hline
Error/Parameter &  $c$ & $m$ &  $ ( \sqrt{X_t} \odot \sigma)^{\top} ( \sqrt{X_t} \odot \sigma) $  \\ \hline \hline
Maximum Error &  0.0157   & 0.009  & 0.010   \\ \hline
99\% quantile of error & 0.010  & 0.007 & 0.008    \\ \hline
99.9\% quantile of error &  0.0146  & 0.009 & 0.010  \\ \hline
Mean squared error & $1.49 \times 10^{-5} $  &   $6.65 \times 10^{-6}$ & $4.21 \times 10^{-6}$   \\ \hline
Mean Error in percent & 0.21  & 0.137 &  0.0623   \\ \hline
Maximum error in percent & 1.12 & 0.695 & 0.456     \\ \hline
99\% quantile of error in percent & 0.782  & 0.506 & 0.415  \\ \hline
99.9\% quantile of error in percent & 1.06   & 0.616   & 0.455   \\ \hline
 \end{tabular}
\end{center}
\caption{\label{CIRerror}  Accuracy is reported in percent and averaged across $317$ simulations.  Each simulation has a different random initialization for $c,m,$ and $\sigma$. The dimension $d=3$, the time step size is $10^{-2}$, and accuracy is evaluated at the final time $5 \times 10^5$.  $X_t$ is simulated using \cite{Alfonsi}.  Observations of the quadratic variation are generated from $( \sqrt{X_t} \odot \sigma)^{\top} ( \sqrt{X_t} \odot \sigma)$ at times $t = 0, .01, .02, \ldots$.  $ ( \sqrt{X_t} \odot \sigma)^{\top} ( \sqrt{X_t} \odot \sigma) $  is the quadratic variation per unit of time.  For each simulation, the average error (or average percent error) for the quadratic variation per unit time is calculated by averaging across many points in the path of $X_t$.  Then, the statistics in the third column of the table are calculated using the average errors (or average percent errors) from the $317$ simulations.}
\end{table}

\section{American Options} \label{NumericalAnalysis2}
High-dimensional American options are extremely computationally challenging to solve with traditional numerical methods such as finite difference.  Here we propose a new approach using statistical learning to solve high-dimensional American options.  SGDCT achieves a high accuracy on two benchmark problems with 100 dimensions.

\subsection{Q-learning}
Before describing the SGDCT algorithm for American options, it is important to note that traditional stochastic gradient descent faces certain difficulties in this class of problems.  Some brief remarks are provided below regarding this fact; the authors plan to elaborate on these issues in more detail in a future work.   The well-known Q-learning algorithm uses stochastic gradient descent to minimize an approximation to the discrete-time Hamilton-Jacobi-Bellman equation.  To demonstrate the challenges and the issues that arise, consider using Q-learning to estimate the value function:
\begin{eqnarray}
V(x) &=& \mathbb{E} \bigg{[}  \int_0^{\infty} e^{- \gamma t} r(X_t) dt \bigg{|} X_0 = x \bigg{]},\quad  X_t  = x+  W_t,
\label{Qlearning1}
\end{eqnarray}
where $\gamma>0$ is a discount factor and $r(x)$ is a reward function. The function $Q(x, \theta)$ is an approximation for the value function $V(x)$.  The parameter $\theta$ must be estimated.  The traditional approach would discretize the dynamics (\ref{Qlearning1}) and then apply a stochastic gradient descent update to the objective function:
\begin{eqnarray}
\mathbb{E} \bigg{[} \bigg{(} r(X_t) \Delta +  \mathbb{E}[ e^{-\gamma \Delta} Q(X_{t+ \Delta}; \theta)  | X_t ] - Q(X_t \theta)  \bigg{)}^2 \bigg{]}.
\label{Qlearningobjective}
\end{eqnarray}
This results in the stochastic gradient descent algorithm:
\begin{eqnarray}
\theta_{t+ \Delta} &=& \theta_t  - \frac{ \alpha_t }{\Delta} \bigg{(} e^{- \gamma \Delta} \mathbb{E} \big{[} Q_{\theta} (X_{t+ \Delta}; \theta_t) \big{|} X_t \big{]} - Q_{\theta} (X_t; \theta_t) \bigg{)} \notag \\
&\times& \bigg{(}  r(X_t) \Delta +   e^{- \gamma \Delta} \mathbb{E} \big{[} Q(X_{t+ \Delta}; \theta_t) \big{|} X_t \big{]} - Q(X_t; \theta_t) \bigg{)}.
\label{Qlearning2}
\end{eqnarray}
Note that we have scaled the learning rate in (\ref{Qlearning2}) by $\frac{1}{\Delta}$.  This is the correct scaling for taking the limit $\Delta \rightarrow 0$.
The algorithm (\ref{Qlearning2}) has a major computational issue.  If the process $X_t$ is high-dimensional, $\mathbb{E} \big{[} Q(X_{t+ \Delta}; \theta_t) \big{|} X_t \big{]}$ is computationally challenging to calculate, and this calculation must be repeated for a large number of samples (millions to hundreds of millions).  It is also important to note that for the American option example that follows the underlying dynamics are known.  However, in reinforcement learning applications the transition probability is unknown, in which case $\mathbb{E} \big{[} Q(X_{t+ \Delta}; \theta_t) \big{|} X_t \big{]}$ cannot be calculated.  To circumvent these obstacles, the Q-learning algorithm ignores the inner expectation in (\ref{Qlearningobjective}), leading to the algorithm:
\begin{eqnarray}
\theta_{t+ \Delta} &=& \theta_t  - \frac{ \alpha_t }{\Delta} \bigg{(} e^{- \gamma \Delta} Q_{\theta} (X_{t+ \Delta}; \theta_t)  - Q_{\theta} (X_t; \theta_t) \bigg{)}  \bigg{(}  r(X_t) \Delta +   e^{- \gamma \Delta} Q(X_{t+ \Delta}; \theta_t)  - Q(X_t; \theta_t) \bigg{)}.
\label{Qlearning3}
\end{eqnarray}
Although now computationally efficient, the Q-learning algorithm (\ref{Qlearning3}) is now biased (due to ignoring the inner expectations).  Furthermore, when $\Delta \rightarrow 0$, the Q-learning algorithm (\ref{Qlearning3}) blows up.  A quick investigation shows that the term $\frac{1}{\Delta} (W_{t+\Delta} - W_t )^2 = O(1)$ arises while all other terms are $O(\Delta)$ or $O(\sqrt{\Delta})$.

The SGDCT algorithm is unbiased and computationally efficient.  It can be directly derived by letting $\Delta \rightarrow 0$ and using It\^{o}'s formula in (\ref{Qlearning2}):
\begin{eqnarray}
d \theta_t &=& - \alpha_t \bigg{(} \frac{1}{2} Q_{\theta xx}(X_t; \theta_t) - \gamma Q_{\theta}(X_t; \theta_t) \bigg{)}  \bigg{(} r(X_t) + \frac{1}{2} Q_{xx}(X_t; \theta_t) - \gamma Q(X_t; \theta_t)  \bigg{)}  dt.
\label{ContinuousTimeQlearning}
\end{eqnarray}
Note that computationally challenging terms in (\ref{Qlearning2}) become differential operators in (\ref{ContinuousTimeQlearning}), which are usually easier to evaluate.  This is one of the advantages of developing the theory in continuous time for continuous-time models.  Once the continuous-time algorithm is derived, it can be appropriately discretized for numerical solution.

\subsection{SGDCT for American Options}
Let $X_t \in \mathbb{R}^d$ be the prices of $d$ stocks.  The maturity date is time $T$ and the payoff function is $g(x): \mathbb{R}^d \rightarrow \mathbb{R}$.  The stock dynamics and value function are:
\begin{eqnarray}
d X_t^i &=&  \mu(X_t^i) dt +\sigma(X_t^i) d W_t^i, \notag \\
V(t,x) &=&  \sup_{\tau \geq t} \mathbb{E}[ e^{- r( \tau \wedge T)} g(X_{\tau \wedge T}) | X_t = x],
\end{eqnarray}
where $W_t \in \mathbb{R}^d$ is a Brownian motion.  The distribution of $W_t$ is specified by$\textrm{Var}[ W_t^i ] = t$ and $\textrm{Corr}[ W_t^i, W_t^j] = \rho_{i,j}$ for $i \neq j$.  The SGDCT algorithm for an American option is:
\begin{eqnarray}
 \theta_{\tau \wedge T}^{n+1} &=& \theta_0^n - \int_0^{\tau \wedge T} \alpha_t^{n+1} \bigg{(} \frac{\partial}{\partial t} Q_{\theta}(t, X_t; \theta_t^{n+1}) + \mathcal{L}_{x} Q_{\theta}(t, X_t; \theta_t^{n+1}) -r Q_{\theta} (t, X_t; \theta_t^{n+1}) \bigg{)}  \notag \\
 &\times& \bigg{(}\frac{\partial Q}{\partial t} (t, X_t; \theta_t^{n+1}) + \mathcal{L}_{x}Q (t, X_t; \theta_t^{n+1})  -r Q (t, X_t; \theta_t^{n+1})  \bigg{)}  dt \notag \\
 &+&  \alpha_{\tau \wedge T}^{n+1} Q_{\theta} (\tau \wedge T, X_{\tau \wedge T}; \theta_{\tau \wedge T}^{n+1}) \bigg{(} g(X_{\tau \wedge T}) - Q(\tau \wedge T, X_{\tau \wedge T}; \theta_{\tau \wedge T}^{n+1}) \bigg{)}, \notag \\
 \tau &=& \inf \{ t \geq 0 : Q(t, X_t; \theta_t^{n+1}) < g(X_t)  \}, \notag \\
 X_0 &\sim& \nu(dx).
\label{ContinuousTimeQlearning}
\end{eqnarray}
$\mathcal{L}_{x}$ is the infinitesimal generator for the $X$ process.  The continuous-time algorithm (\ref{ContinuousTimeQlearning}) is run for many iterations $n = 0,1,2, \ldots$ until convergence.  See the authors' paper \cite{DGM} for implementation details on pricing American options with deep learning.  

We implement the SGDCT algorithm (\ref{ContinuousTimeQlearning}) using a deep neural network for the function $Q(t, x; \theta)$.  Two benchmark problems are considered where semi-analytic solutions are available.  The SGDCT algorithm's accuracy is evaluated for American options in $d=100$ dimensions, and the results are presented in Table \ref{AmericanOptionTable}.

\begin{table}[ht!]
\begin{center}
 \begin{tabular}{  |c| c  | c | c| }
   \hline
Model &   Number of dimensions  &  Payoff function & Accuracy   \\ \hline \hline
Bachelier &   100                     &  $ \displaystyle g(x)=\max \big{(} \frac{1}{d} \sum_{i=1}^d x_i - K, 0 \big{)} $ & 0.1\%   \\ \hline
Black-Scholes & 100    & $\displaystyle g(x)=\max \big{(} (\prod_{i=1}^d x_i )^{1/d}- K, 0 \big{)} $              & 0.2\%    \\ \hline
 \end{tabular}
\end{center}
\caption{\label{AmericanOptionTable} For the Bachelier model, $\mu(x) = r -c$ and $\sigma(x) = \sigma$.  For Black-Scholes, $\mu(x) = (r -c) x$ and $\sigma(x) = \sigma x$.  All stocks are identical with correlation $\rho_{i,j} = .75$, volatility $\sigma = .25$, initial stock price $X_0 = 1$, dividend rate $c = 0.02$, and interest rate $r = 0$.  The maturity of the option is $T =2$ and the strike price is $K=1$.  The accuracy is reported for the price of the at-the-money American call option.}
\end{table}

\appendix
\section{On a related Poisson equation}\label{S:RegularityResults}

We recall the following regularity result from \cite{pardoux2003poisson} on the Poisson equations in the whole space, appropriately stated to cover our case of interest.
\begin{theorem} \label{T:RegularityPoisson}
Let Conditions \ref{A:LyapunovCondition} and \ref{A:Assumption1} be satisfied. Assume that $G(x,\theta)\in C^{\alpha,2}\left(\mathcal{X},\mathbb{R}^{n}\right)$,
\begin{equation}
\int_{\mathcal{X}}G(x,\theta)\pi(dx)=0,\label{Eq:CenteringCondition}
\end{equation}
and that for some positive constants $K$ and $q$,
\[
    \sum_{i=0}^{2}\left|\frac{\partial^{i} G}{\partial \theta^{i}}(x,\theta)\right|\leq K\left(1+|x|^{q}\right)
\]
Let $\mathcal{L}_{x}$ be the infinitesimal generator for the $X$ process. Then the Poisson equation
\begin{eqnarray}
& &\mathcal{L}_{x}u(x,\theta)=G(x,\theta),\quad\int_{\mathcal{X}}%
u(x,\theta)\pi(dx)=0 \label{Eq:CellProblem}
\end{eqnarray}
has a unique solution that satisfies $u(x,\cdot)\in C^{2}$ for every $x\in\mathcal{X}$, $\partial_{\theta}^{2}u\in C\left(\mathcal{X}\times\mathbb{R}^{n}\right)$ and there exist positive constants $K'$ and $q'$ such that
\[
    \sum_{i=0}^{2}\left|\frac{\partial^{i} u}{\partial \theta^{i}}(x,\theta)\right|+\left|\frac{\partial^{2} u}{\partial x\partial \theta}(x,\theta)\right|\leq K'\left(1+|x|^{q'}\right).
\]
\end{theorem}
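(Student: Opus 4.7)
The plan is to reduce the statement to the scalar, fixed-parameter Poisson-equation result of \cite{pardoux2003poisson} and then upgrade it to the full parameter-dependent version by differentiating the equation in $\theta$ and invoking elliptic Schauder regularity for the mixed derivative.

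First I would check that the hypotheses of the base Pardoux--Veretennikov theorem are met for each fixed $\theta$. Condition \ref{A:LyapunovCondition} supplies the dissipative drift condition $\lim_{|x|\to\infty} f^{\ast}(x)\cdot x = -\infty$ together with a uniformly elliptic, bounded diffusion coefficient $\sigma\sigma^{\top}$; this is exactly the setting in which \cite{pardoux2003poisson} produce a unique centered solution $u(\cdot,\theta)$ with polynomial growth $|u(x,\theta)|\leq K(1+|x|^{q'})$. The probabilistic representation $u(x,\theta)=\int_0^\infty \mathbb{E}^x G(X_t,\theta)\,dt$ makes the bound transparent: the integrand is polynomially bounded in $x$ through Proposition 1 of \cite{PardouxVeretennikov1}, and it decays exponentially in $t$ by the $\beta$-mixing implied by the Lyapunov condition together with the centering $\int G(\cdot,\theta)\,d\pi=0$.

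Next I would handle the $\theta$-regularity by formally differentiating the Poisson equation. For each $i\in\{1,\ldots,n\}$ the function $w_i:=\partial_{\theta_i} u$ should satisfy $\mathcal{L}_x w_i=\partial_{\theta_i} G$ together with the centering condition $\int w_i(x,\theta)\,\pi(dx)=0$. To justify this rigorously I would differentiate the probabilistic representation under the time integral, using dominated convergence with the majorant $K(1+\mathbb{E}^x|X_t|^q)\,e^{-\beta t}$ provided by the polynomial growth assumption on $\partial_{\theta_i} G$ and by exponential mixing. The needed centering of $\partial_{\theta_i} G$ is obtained by differentiating $\int G(x,\theta)\,\pi(dx)=0$ under the integral, justified by the polynomial growth of $\partial_{\theta_i} G$ and the polynomial moments of $\pi$ (the bound used in Lemma \ref{L:BoundednessOfg}). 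Reapplying the scalar Pardoux--Veretennikov result to $\partial_{\theta_i} G$ then gives the polynomial bound on $w_i$, and one more iteration with $\partial^2_{\theta_i\theta_j} G$ yields the $i=2$ bound in the statement.

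Finally, the mixed-derivative estimate $|\partial^2_{x\theta}u(x,\theta)|\leq K'(1+|x|^{q'})$ will come from interior elliptic Schauder estimates applied to $\mathcal{L}_x w_i=\partial_{\theta_i}G$. On each ball $B_R(x_0)$, Schauder theory controls $\|w_i\|_{C^{1,\alpha}(B_{R/2}(x_0))}$ by $\|w_i\|_{L^\infty(B_R(x_0))}+\|\partial_{\theta_i} G\|_{C^\alpha(B_R(x_0))}$; both of these are polynomially bounded in $|x_0|$ by the preceding step and by Condition \ref{A:Assumption1}. A standard covering argument centered at a sequence of points $x_0$ with $|x_0|\to\infty$ then globalizes this to a polynomial bound on $\partial_x\partial_\theta u$. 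The main obstacle, in my view, is the careful bookkeeping needed to track the polynomial exponents consistently across the three stages (base solution $u$, iterated $\theta$-derivatives, and Schauder localization) while verifying the hypotheses for differentiation under the integral uniformly in $(x,\theta)$; both issues are resolved by repeatedly enlarging the exponent $q'$, which is permitted because $\pi$ has finite polynomial moments of every order.
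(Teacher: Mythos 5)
The paper does not actually prove this theorem: it is stated in Appendix \ref{S:RegularityResults} as a result ``recalled'' from \cite{pardoux2003poisson}, appropriately specialized, so there is no in-paper proof to match. Your proposal is essentially a reconstruction of the Pardoux--Veretennikov argument itself, and its skeleton is the right one: the probabilistic representation of the centered solution for each fixed $\theta$, differentiation of that representation in $\theta$ (with the centering of $\partial_\theta G$ obtained by differentiating $\int G\,d\pi=0$ under the integral, using the polynomial moments of $\pi$), and interior Schauder estimates on balls $B_R(x_0)$ to extract the mixed derivative bound $|\partial^2_{x\theta}u|\leq K'(1+|x|^{q'})$. These are precisely the mechanisms behind the cited theorem, so as a blind derivation of a result the paper takes off the shelf, the approach is sound.

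Two caveats are worth fixing. First, the representation should read $u(x,\theta)=-\int_0^\infty \mathbb{E}^x G(X_t,\theta)\,dt$ for $\mathcal{L}_x u = G$ with the stated centering (a sign slip only), and the decay in $t$ that Condition \ref{A:LyapunovCondition} actually delivers is polynomial of arbitrarily high order rather than genuinely exponential $\beta$-mixing; this is harmless since any integrable, $\theta$-uniform decay suffices for convergence of the time integral and for dominated convergence when differentiating in $\theta$, but the claim of exponential decay is stronger than what $\lim_{|x|\to\infty}f^{\ast}(x)\cdot x=-\infty$ guarantees. Second, in the Schauder step the constant in the interior estimate on $B_R(x_0)$ depends on the H\"older norms of the coefficients of $\mathcal{L}_x$ on that ball, hence on $\|f^{\ast}\|_{C^{\alpha}(B_R(x_0))}$; to conclude a \emph{polynomial} bound in $|x_0|$ after the covering argument you need polynomial control of these local norms, which Condition \ref{A:Assumption1} does not state explicitly. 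This is an imprecision inherited from the paper's own hypotheses relative to \cite{pardoux2003poisson} rather than a flaw specific to your argument, but a complete proof would have to make that growth assumption explicit.
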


\end{document}